\def \F {{\mathcal{F}_{\text{drag}}}}
\def\e {{\bf e}}
\def \x {{\bf x}}
\def \d {{\bf d}}
\def \n {{\bf n}}
\begin{document}

\markboth{L. Berlyand at el.}{Boundary conditions in modeling of collisions}

%
\catchline{}{}{}{}{}
%

\title{BOUNDARY CONDITIONS IN PDE MODEL OF COLLISIONS OF SWIMMERS}

\author{LEONID BERLYAND}

\address{Department of Mathematics, Pennsylvania State University, \\
University Park, Pennsylvania 16802, USA \\
berlyand@math.psu.edu}

\author{VITALIY GYRYA}

\address{Los Alamos National Laboratory, MS B284\\
Los Alamos, New Mexico 87545, USA\\
vitalishe@gmail.com}

\author{MYKHAILO POTOMKIN}

\address{Department of Mathematics, Pennsylvania State University, \\
University Park, Pennsylvania 16802, USA \\
potomkin@math.psu.edu}

\maketitle

\begin{history}
\received{(Day Month Year)}
\revised{(Day Month Year)}
\comby{(xxxxxxxxxx)}
\end{history}

\begin{abstract}
The goal of the paper is to determine  boundary conditions in PDE models of collisions of microswimmers in a viscous fluid. We consider two self-propelled spheres (microswimmers) moving towards each other in viscous fluid.
We first show that under commonly used no-slip boundary conditions on the fluid-solid interface the microswimmers do not collide which is a generalization of the well-known no-collision paradox for solid bodies (with no self-propulsion) in a viscous fluid.
Secondly, we show that the microswimmers do collide when the no-slip boundary conditions are replaced by the Navier boundary conditions which therefore provides an adequate model of microswimmers such as swimming bacteria.
The self-propulsion mechanism generates a drag force pulling a bacterium backwards and the collision problem is reduced to the analysis of competition between the drag and self-propulsion.
For no-slip this is done by utilizing the Lorentz Reciprocal Theorem and the analytical solution for two solid spheres in the fluid.
The analysis for the Navier boundary conditions is based on the variational formulation of Stokes problem. A Poincare type inequality for symmetrized gradient is introduced in this work.
\end{abstract}

\keywords{Self-propulsion, Stokes flow, no collision paradox, Navier boundary conditions}

\ccode{AMS Subject Classification: 76D07,76D08}

\section{Introduction}

In recent years active suspensions became a focus of attention in both
biophysical and mathematical communities.
This was motivated by a number of experiments
highlighting differences between active and passive suspensions and
suggesting novel engineering applications.
These experiments (just mention a few) include:
decrease of effective viscosity for active suspensions
compared with passive ones with the same volume fraction of inclusions \cite{SokAra09},
swimming bacteria rotating small asymmetric gears \cite{SokApoGrzAra10},
enhanced mixing in active suspensions of swimming bacteria \cite{SokGolFelAra09},
collective behavior of swimming microorganisms E. coli
observed \cite{WuLib00}
through correlations in swimmer orientations on scales
order of magnitude larger than the size of individual swimmer.
We refer also to theoretical works \cite{SaiShe12,SaiShe07,HaiSokAraBerKar09}.
Swimmer-swimmer interactions are essential for all of the above phenomena.
Therefore, for the mathematical models to recover the physically observed phenomena
it is important to model accurately these interactions.

Our focus will be on one of the aspects of swimmer-swimmer interactions -- collisions.
It is hard to perform an experiment with collisions of two swimmers. Instead, collisions between swimmers and walls have been
clearly observed in physical experiments \cite{AraSokKesGol07,CisKesGanGol11,DreDunCisGan11}.
As we show in this paper,
some of the commonly accepted models of active swimmers in the fluid considered in literature
do not allow for collisions in finite time.
In this paper we propose a model that captures finite time collisions.


Modeling of active suspensions consists of several components:
fluid motion, particle (swimmer) motion, particle-fluid interactions, and self-propulsion.
Modeling of fluid motion at the scale of microswimmers (micron scale)
by Stokes equation has been universally accepted.
Particles are commonly taken to be rigid bodies.
The interactions between the particle and a fluid are prescribed through the boundary conditions
on the surface of the particle.
Existing models of self-propelled swimmers use no-slip boundary conditions on the entire swimmer's surface or its part\cite{SaiShe12,GyrLipAraBer11,KanSheTor10,IshSimPed06},
which mean that fluid sticks to the surface and moves with the same velocity as the boundary.
Here we model self-propulsion by a point source term with given magnitude and direction applied at some distance behind the body\cite{HaiSokAraBerKar09,GyrAraBerKar10}.
We present the model in details in Section~\ref{sect:model of microswimmers}.

For passive suspensions (no self-propulsion) there is a well-known no-collision paradox \cite{CoxBre67}.
It states that two rigid particles with no-slip boundary conditions,
immersed in a Stokesian fluid and
pushed towards each other with a constant force will take infinite time to collide.
This is a consequence of an asymptotic relationship $v\sim h$
between the velocity $v$ of the spheres and the distance $2h$ between
the spheres in the limit of small $h$, which leads to asymptotically exponential decay of the distance between the particles.
It is called a paradox due to a mismatch between the theoretical predictions
of no collisions and the experimental observations of possibility of collisions.

A possible remedy to the no-collision paradox
is to replace the no-slip boundary conditions by Navier ones.
The Navier boundary conditions allow for a slip on the fluid/solid interface
with friction linearly proportional to the slip velocity.
The proportionality coefficient $\beta$
can be related to the surface roughness. Navier boundary conditions were derived as a rigorous homogenization limit of models of immersed rigid bodies with mean roughness (size of bumps) of order $\beta$ when $\beta$ is small (see \cite{JagMik03}).
Two passive spheres with Navier conditions on the boundary
pushed towards each other by a constant force will collide in finite time.
This follows from an estimate on the drag force for a two spheres moving towards each other with a unit velocity, which was recently proven in \cite{GerHil11}.

The no collision paradox also holds for active suspensions in the following sense.
Two swimmers moving towards each other in Stokes fluid do not collide in finite time,
see Section~\ref{sect:main results}.
We also prove that for sufficiently large initial velocities
two swimmers with Navier boundary conditions,
moving towards each other, collide in finite time, see Section~\ref{sect:main results}.
The no-slip boundary conditions have been successfully used in the study
of dilute suspensions of noninteracting swimmers\cite{HaiAraBerKar12,HaiSokAraBerKar09,HaiAraBerKar08},
pairwise interacting swimmers\cite{KanSheTor10,GyrAraBerKar10,HerStoGra05}
and collective behavior of swimmers\cite{GyrLipAraBer11,CisKesGanGol11}.

To recover collisions for more concentrated suspensions
(where interactions are important)
the no-slip conditions should be replaced by Navier boundary conditions
(or, possibly by some other conditions).
%

The key ingredients of our analysis are estimates for the drag force generated
by the propulsion mechanism onto the body of the swimmer.
In the analysis of the model with no-slip boundary conditions we use the Lorentz reciprocal theorem (or the second Green's formula for the Stokes equation, see Theorem \ref{thm: LRT})
to reduce the original problem with singular $\delta$-functions to an auxiliary problem without singularities.
For the reduced problem the exact solution is known in a form of an infinite series.
The analysis of the model with the Navier boundary conditions is based on the variational principle for nonhomogeneous Stokes problem, the representation of the drag force via minimization principle (see Proposition \ref{prop: variation principle}),
and the special Poincar\'{e}-type inequality (Proposition~\ref{prop: poincare special})
are used.

The paper is organized as follows.
The models for swimmers with no-slip and Navier boundary conditions
are presented in Section \ref{sect:model of microswimmers}.
In Section~\ref{sect:review of passive suspensions} we outline some relevant results
for passive spheres.
The main results
(no collisions for no-slip boundary conditions, finite time collisions for Navier boundary conditions) are presented in Section~\ref{sect:main results}.
The proofs of these results are presented in Sections ~\ref{sect:proof of the noslip result} and \ref{sect:proof of the Navier result}, respectively.

\section {PDE/ODE model for microswimmers}
\label{sect:model of microswimmers}

We consider the motion of two self-propelled spheres (swimmers).
At time $t\geq 0$ they are separated by distance $2h(t)$ and occupy regions $B_1=\left\{\x:|\x-\x_c^1|<1\right\}$ and $B_2=\left\{\x:|\x-\x_c^2|<1\right\}$,
see Fig 1.
Here  $\x_c^1=(0,0,-1-h(t))$ and $\x_c^2=(0,0,1+h(t))$ are centers of swimmers' bodies.
Assume that the swimmers are immersed in a viscous incompressible fluid.
The fluid occupies the domain $\Omega_{h(t)}=\mathbb R^3\backslash \overline{(B_1\cup B_2)}$. For a given distance $h>0$ the fluid is described by a vector field $u$
which solves the Stokes problem in $\Omega_h$:
\begin{equation}\label{stokes}
    -\Delta u+\nabla p=\sum\limits_{i=1,2}\delta(\x-\x_p^i)f_p\d^i,\;\;\nabla\cdot u=0.
\end{equation}
Here $\delta(\x)$ is a delta-function,
$f_p$ is a positive parameter which represents the intensity of self-propulsion,
$\d^1=(0,0,1)$ and $\d^2=(0,0,-1)$ are directions of the swimmers' motion, $\x_p^1=(0,0,-1-h-\lambda)$ and $\x_p^2=(0,0,1+h+\lambda)$ are ends of bacteria flagella and $\lambda$ is the length of flagella. 
We also assume that $\nabla (u-u_{\Phi})\in \left[L^2(\Omega_h)\right]^3$,
where $u_{\Phi}(\x)=\mathcal{G}(\x-\x^1_p)\d^1+\mathcal{G}(\x-\x^2_p)\d^2$ and $\mathcal{G}(\x)=1/8\pi(1/|\x|\cdot\mathrm{I}+\x\x/|\x|^3)$ is the Oseen tensor.

\begin{figure}{}\label{fig1}
\vskip1mm\centerline{\includegraphics[height=3cm]{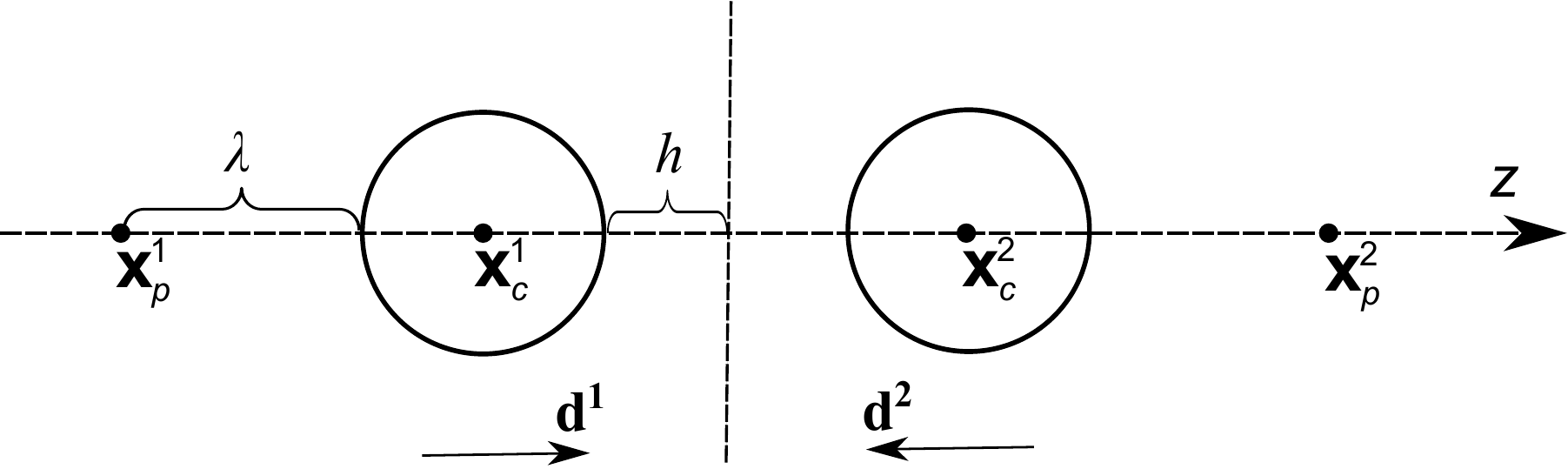}}\caption{Two symmetric swimmers on distance $h$ and with flagellum length $\lambda$}
\end{figure}
Equations \eqref{stokes} are supplemented by one of two following boundary conditions on $\partial B_i$:
\begin{eqnarray}
    &&  u+h'(t)\d^i=0; \label{noslip}\\
    && (u+h'(t)\d^i)\cdot \n=0,
    \;\;(u+h'(t)\d^i)\times \n=-2\beta[D(u)\n]\times \n.
    \label{navier}
\end{eqnarray}
The boundary conditions \eqref{noslip} and \eqref{navier} are no-slip and Navier boundary conditions, respectively. Here $\beta$ is a positive parameter, and $D(u)=\frac{1}{2}(\nabla u+(\nabla u)^{T})$. Note that if $\beta=0$, then \eqref{navier} becomes \eqref{noslip}. The vector $u+h'(t)\d^i$ is the relative velocity of fluid particles near a swimmer with respect to this swimmer. Navier boundary conditions \eqref{navier} state that tangential components of relative velocity and the stress on the boundary are proportional.

For given flow $u$ and pressure $p$ the drag force on sphere $\partial B_i$ is
\begin{equation}\label{stress}
    \F(u)=-\int\limits_{\partial B_i}\sigma(u,p)\n ds\cdot\d^i,
\end{equation}
where $\sigma(u,p)=-2D(u)+p\mathrm{I}$ is the stress tensor and
$\n$ is a normal vector on $\partial B_i$.
Due to the symmetry of the problem
(spheres are identical) the right hand side of \eqref{stress}
does not depend on $i=1,2$.
The force balance equation is similar for both spheres and looks as follows
\begin{equation}\label{balance}
    mh''(t)-\F(u)+f_p=0,
\end{equation}
where $m\geq 0$ is mass of a swimmer.

We assume that the swimmers do not rotate and, thus, torque balance equation trivially holds.

\begin{remark} \label{remark1}
In the balance equation \eqref{balance} the propulsion enters explicitly in the third term, $f_p$, and implicitly in the second term, $\F(u)$.
We wish to rewrite \eqref{balance} in such a way that the propulsion enters only in one of the terms and does so explicitly.
We can consider the drag force $\F(u)$ as a function of distance $h(t)$ and speed $h'(t)$ in the following sense.
For a given distance $h(t)$ and velocity $h'(t)$ the function $u$ is uniquely determined by the Stokes problem \eqref{stokes} with boundary conditions \eqref{noslip} or \eqref{navier}.
Next our goal will be to explicitly write out the dependence of $\F(u)$ on $h(t)$ and $h'(t)$.
Due to the linearity of the Stokes problem we can decompose the fluid flow $u$ into two components
\begin{equation}
    u=-h'(t)\tilde v+f_p \tilde w,
\end{equation}
where the first component $-h'(t)\tilde v$ is generated by the motion of the spheres and
the second component $f_p \tilde w$ is due to the propulsion force.
The function $\tilde v$ and $\tilde{w}$ solve the following Stokes problems
\[
    \left\{
    \begin{array}{l}
    \Delta \tilde{v}-\nabla p_{\tilde v}=0\text{ in }\Omega_{h(t)},\\
    \nabla\cdot \tilde v=0\text{ in }\Omega_{h(t)},\\
    (\tilde v-\d^i)\cdot \n=0\text{ on }\partial B_i,\; i=1,2,\\
    (\tilde v-\d^i)\times \n=-2\beta\left[D(\tilde v)\n\right]\times \n,
    \end{array}
    \right.
    \qquad\left\{
    \begin{array}{l}
    \Delta \tilde w-\nabla p_{\tilde w}=\sum\limits_{i=1,2}\delta(\x-\x_i^p)\d^i\text{ in }\Omega_{h(t)},\\
    \nabla\cdot \tilde w=0\text{ in }\Omega_{h(t)},\\
    \tilde w\cdot \n=0\text{ on }\partial B_i,\; i=1,2,\\
    \tilde w\times \n=-2\beta\left[D(\tilde w)\n\right]\times \n.
    \end{array}
    \right.
\]
Due to the linear dependence of the drag force $\F$ on the flow $u$ we obtain
\begin{equation*}
\F(u)=-h'(t)\kappa_{\text{pass}}+f_p \kappa_{\text{prop}},
\end{equation*}
where $\kappa_{\text{pass}}:=\F(\tilde v)$ 
is the drag force on a passive sphere moving with a unit velocity, and
$\kappa_{\text{prop}}:=\F(\tilde w)$ 
is the drag force on a motionless sphere due to a unit propulsion force.
Note that both coefficients $\kappa_{\text{prop}}$ and $\kappa_{\text{pass}}$
depend on the distance $h$.

Using the drag coefficients $\kappa_{\text{prop}}$ and $\kappa_{\text{pass}}$,
we can rewrite the balance equation \eqref{balance} as follows:
\begin{equation}\label{expl_be}
    m h'' (t)+\kappa_{\text{pass}}h'(t)+f_p(1-\kappa_{\text{prop}})=0.
\end{equation}

Now, the self-propulsion parameter enters only the third term in \eqref{expl_be},
unlike \eqref{balance} where $\F(u)$ depends on $f_p$.
If $f_p=0$, then \eqref{expl_be} becomes a force balance equation for passive spheres.

\end{remark}

\section{Review of known results about collision for passive inclusions}
\label{sect:review of passive suspensions}

We now briefly review the result for passive spheres.

Consider two unit spheres moving along towards each other along the common axis $z$ 
(see Fig 1.).
Let $h(t)$ be a half-distance between the spheres. 
Then the speed of each sphere is $-h'(t)$.
Assume that the external force $f_{\mathrm{ext}}$ pushes the spheres towards each other. 
Denote by $\F$ a magnitude of the drag force of the fluid computed by the formula \eqref{stress}. 
The force balance on each sphere is
\begin{equation}\label{be}
-\F+f_{\mathrm{ext}}=0.
\end{equation}

From remark \ref{remark1} the drag force $\F$ depends linearly on the speed of the spheres $-h'(t)$:
\begin{equation}\label{def_drag}
\F=-\kappa_{\mathrm{pass}}h'(t).
\end{equation}

The drag coefficient $\kappa_{\mathrm{pass}}$ is a function of half-distance $h$. 
It also depends on the parameter $\beta\geq 0$, i.e., it depends on the choice of the boundary conditions.

The dependence of $\kappa_{\mathrm{pass}}$ on $h$ for no-slip boundary conditions, $\beta=0$, is known as a classical result (see, e.g., \cite {CoxBre67,CooNei37}). 
For Navier boundary conditions, $\beta>0$, the result has been justified recently 
(see \cite {GerHil11}):
\begin{eqnarray}
    &&
    \text{if } 0<h<\beta<<1,
    \text{ then } \kappa_{\mathrm{pass}}\asymp \frac{1}{\beta}\ln\frac{1}{h},
    \label{k_navier}\\
    &&
    \text{if } 0 \leq \beta<h<<1,
    \text{ then } \kappa_{\mathrm{pass}}\asymp \frac{1}{h}.
    \label{k_pass}
\end{eqnarray}


{First, consider no-slip boundary conditions.}
From \eqref{k_pass} with $\beta=0$ we may conclude that for some positive constant $C>0$ the following inequality holds:
\begin{equation}\label{Cdh}
    \kappa_{\mathrm{pass}}< \frac{C}{h}.
\end{equation}
Using \eqref{be}, \eqref{def_drag}, and \eqref{Cdh}, we obtain
\begin{equation*}
    h(t)>h(0)\mathrm{exp}(-Cf_{\mathrm{ext}} t).
\end{equation*}

It implies that $h(t)$ can not vanish for any finite $t>0$.
It says that the distance between spheres will never become zero,
thus they will never collide whatever positive numbers $h(0)$ and $f_{\mathrm{ext}}$ are.
This contradicts to what happens in real life and it is known like {\it no-collision paradox}.

{\it Secondly, consider Navier boundary conditions.}
It is sufficient to restrict ourselves to the case $h<\beta<<1$. The relation \eqref{k_navier} implies that for some positive constant $C>0$ the following inequality holds:
\begin{equation}\label{Clnh}
    \kappa_{\mathrm{\text{pass}}}>\frac{C}{\beta}\ln\frac{1}{h}.
\end{equation}
Using \eqref{be}, \eqref{def_drag} and \eqref{Clnh}, we obtain
\begin{equation*}
    Ch(t)(\ln h(t)-1)-Ch(0)(\ln h(0)-1)>f_{\mathrm{ext}}t.
\end{equation*}
This inequality together with $h'(t)<0$ predicts that there exists $T_{\text{coll}}$ such that $0<T_{\text{coll}}<\infty$ and $h(T_{\text{coll}})=0$. Thus, in the case of Navier boundary conditions collisions do occur.



\section {Two main results}
\label{sect:main results}

\subsection{No collisions for swimmers with no-slip boundary conditions}
\label{sect:results for noslip}

The following theorem is the main result for the problem with the no-slip boundary conditions.

\begin{theorem} Consider coupled PDE/ODE model \eqref{stokes},\eqref{noslip} and \eqref{balance}. Let $h(t)$ be a half-distance between two swimmers at  $t>0$, introduced in Section 3. Then there exist positive constants $C_1>0$ and $C_2>0$ such that
\begin{equation}\label{main}
    h(t)>C_1e^{-C_2t}
\end{equation}
for all $t>0$.
\end{theorem}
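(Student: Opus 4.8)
The plan is to pass from the PDE/ODE system to the scalar equation~\eqref{expl_be} of Remark~\ref{remark1},
\begin{equation*}
m h''(t) + \kappa_{\text{pass}}(h(t))\, h'(t) + f_p\bigl(1 - \kappa_{\text{prop}}(h(t))\bigr) = 0 ,
\end{equation*}
and to derive \eqref{main} by a purely ODE argument once two facts about the drag coefficients are available. The first is the classical two–sphere asymptotics \eqref{k_pass} with $\beta=0$: there are $h_0>0$ and $c>0$ with $\kappa_{\text{pass}}(h)\ge c/h$ for $0<h\le h_0$, while continuity of $\kappa_{\text{pass}}$ on $(0,\infty)$ together with its convergence to the single–sphere Stokes drag as $h\to\infty$ yields a uniform lower bound $\kappa_{\text{pass}}(h)\ge c_1>0$ on all of $(0,\infty)$. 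Writing $F(h):=f_p(1-\kappa_{\text{prop}}(h))$, the second and crucial fact is that $F$ is globally bounded: $\sup_{h>0}|F(h)|=:C'<\infty$.

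To establish this bound on $F$ — the only nontrivial input — I would argue as follows. By the decomposition in Remark~\ref{remark1}, $\kappa_{\text{prop}}=\F(\tilde w)$, where $\tilde w$ solves the Stokes system forced by $\sum_i\delta(\x-\x_p^i)\d^i$ with no–slip on $\partial B_i$. Applying the Lorentz Reciprocal Theorem (Theorem~\ref{thm: LRT}) to the pair $(\tilde w,\tilde v)$, with $\tilde v$ the passive flow satisfying $\tilde v=\d^i$ on $\partial B_i$ and no body force, the boundary terms on $\partial B_i$ carrying $\tilde w$ vanish and the volume pairing collapses onto the point forces, producing an identity expressing $\kappa_{\text{prop}}$ as the point value $\tilde v(\x_p^1)\cdot\d^1$: the propulsion drag equals (up to the stress–tensor sign convention) the value, at the flagellar point, of the flow generated by two unit spheres moving toward one another. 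Since $\x_p^1$ lies at the fixed distance $\lambda$ behind the body of the first swimmer, it stays uniformly away from the lubrication gap near $z=0$ (and from both sphere surfaces) as $h\to 0$; invoking the known infinite–series representation of the two–sphere Stokes flow one reads off that $\tilde v(\x_p^1)$ remains bounded as $h\to 0$. Combined with continuity of $\kappa_{\text{prop}}$ on compact subsets of $(0,\infty)$ and its convergence to the isolated–swimmer value as $h\to\infty$, this gives $\sup_{h>0}|F(h)|<\infty$. One point to check is that the reciprocal identity is legitimate despite the interior $\delta$–singularities of $\tilde w$: these sit away from $\partial B_i$, so the boundary integrals are unaffected and the volume term produces exactly $\sum_i\tilde v(\x_p^i)\cdot\d^i$.

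The rest is soft. First, $h$ admits a uniform \emph{upper} bound on its velocity: a standard barrier (or energy) argument applied to the ODE, using $\kappa_{\text{pass}}(h)\ge c_1$ and $|F|\le C'$, gives $h'(t)\le V_{\max}:=\max\{h'(0),\,C'/c_1\}$ on the maximal interval of existence (for $m=0$ one simply reads $|h'|=|F(h)|/\kappa_{\text{pass}}(h)\le C'/c_1$). Next, set $K(h):=\int_{h_0}^{h}\kappa_{\text{pass}}(s)\,ds$; by \eqref{k_pass}, $K(h)\le c\ln(h/h_0)$ for $0<h\le h_0$ and $K(h)\to-\infty$ as $h\to 0^+$. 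The ODE says $\tfrac{d}{dt}\bigl(mh'(t)+K(h(t))\bigr)=-F(h(t))$, so integrating on $[0,t]$ and using $|F|\le C'$ together with the velocity bound,
\begin{equation*}
K(h(t)) \;\ge\; m h'(0)+K(h(0)) - m V_{\max} - C' t \;=:\; A - C' t .
\end{equation*}
If $h(t)\le h_0$ this forces $c\ln(h(t)/h_0)\ge A-C't$, i.e. $h(t)\ge h_0 e^{A/c}e^{-(C'/c)t}$, while if $h(t)>h_0$ the bound $h(t)\ge h_0 e^{-(C'/c)t}$ is immediate. Taking $C_2:=C'/c$ and $C_1:=h_0\min\{1,e^{A/c}\}$ proves \eqref{main}; in particular $h$ never vanishes, so the solution is global.

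The main obstacle is the estimate on $\kappa_{\text{prop}}$ in the second paragraph: turning the geometric intuition ``the flagellum is far from the contact region'' into a bound uniform as $h\to 0$. This is precisely where the Lorentz Reciprocal Theorem (trading the singular propulsion problem for a point evaluation of the regular passive flow) and the explicit series solution for two spheres in Stokes flow do the work; everything downstream is elementary ODE analysis, and in fact the only role of the self–propulsion is to contribute the bounded forcing $F(h)$, so the mechanism behind \eqref{main} is the same one that produces the no–collision paradox for passive spheres.
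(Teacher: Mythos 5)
Your overall architecture is the same as the paper's at the decisive point: both arguments use the Lorentz Reciprocal Theorem (Theorem~\ref{thm: LRT}) to trade the singularly forced propulsion problem for a point evaluation of the regular passive two-sphere flow at the flagellar location --- your identity $\kappa_{\text{prop}}=\d^1\cdot\tilde v(\x_p^1)$ is exactly the paper's \eqref{important} in the resistance normalization --- and both then appeal to the bipolar-coordinate series solution of \cite{StiJef26,Bre61}. Your ODE bookkeeping with $K(h)=\int_{h_0}^{h}\kappa_{\text{pass}}$ is correct (and you rightly use the lower bound $\kappa_{\text{pass}}\ge c/h$ from \eqref{k_pass}, which is the direction the paper actually needs in Proposition~\ref{prop1}). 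But note that for the conclusion \eqref{main} one only needs $-h'(t)\le Ch(t)$, i.e.\ an upper bound on $f_p(1-\kappa_{\text{prop}})$, i.e.\ a \emph{lower} bound on $\kappa_{\text{prop}}$; the two-sided bound $\sup_h|F(h)|<\infty$ you set out to prove is more than is required and correspondingly harder.

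The genuine gap is the sentence ``invoking the known infinite-series representation \dots one reads off that $\tilde v(\x_p^1)$ remains bounded as $h\to 0$.'' This is the entire content of the theorem and it cannot be read off. As $h\to 0$ the bipolar parameter $\alpha$ (defined by $\cosh\alpha=1+h$) behaves like $\sqrt{2h}$, the coefficients $b_n,d_n$ degenerate, the total force $\kappa_{\text{pass}}\sim 1/h$ diverges, and the number of terms contributing appreciably to $\sum_n U_n(\zeta_0)$ grows like $1/\alpha\sim h^{-1/2}$; a uniform-in-$h$ bound on that sum would require quantitative coefficient estimates that you do not supply, and your physical argument (``the flagellar point is far from the lubrication gap'') is exactly the intuition that needs to be turned into mathematics. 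The paper avoids this entirely by proving a \emph{sign} condition rather than a size condition: in Section~\ref{sect:proof of the noslip result} it shows $G_m>0$ and $f(\xi)\le 0$, hence $U_n(\zeta_0)>0$ termwise, hence $u^1_z(\x_p^1)>0$ in \eqref{u_z}, hence $W<0$ (Proposition~\ref{prop2}); the propulsion-induced drag can then simply be discarded, and $-h'=V+W\le V\le Cf_ph$ follows from the passive estimate alone. So the repair for your argument is not to establish boundedness of $\kappa_{\text{prop}}$ but to establish $\d^1\cdot\tilde v(\x_p^1)\ge 0$, which is both sufficient for your ODE step and what the series analysis actually delivers. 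As written, your proof is incomplete at its central step.
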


\noindent{\it P r o o f.} For simplicity, consider the case without inertia ($m=0$). The case $m>0$ relies on the same arguments.
Let $u$ satisfy \eqref{stokes},\eqref{noslip} and \eqref{balance}.
Then $u=v+w$ where $v$ and $w$ satisfy the following systems of equations:\\
\centerline{
I.:
$\left\{
\begin{array}{l}
\Delta v-\nabla p_v=0,\\
\nabla\cdot v=0,\\
v=V\d^i,\\
\F(v)=f_p\d^i,
\end{array}
\right.
$
II.:
$\left\{
\begin{array}{l}
\Delta w-\nabla p_w=\sum\limits_{i=1,2}\delta(\x-\x_i^p)f_p\d^i,\\
\nabla\cdot w=0,\\
w=W\d^i,\\
\F(w)=0.
\end{array}
\right.
$
}
In both systems, I and II, equalities in the first and the second line are satisfied in $\Omega_h$, equalities in the third line are satisfied on $\partial B_i$ for all $i=1,2$, equalities in the fourth line are satisfied for all $i=1,2$.
If $f_p$ and $h$ are given we can find $v$ and $q$ from the first, the second and the fourth equalities of system I.
After that we can find $V\in\mathbb R$ from the third equality.
Similarly, for system II. Thus, there exist mappings $G_{v}$ and $G_{w}$ defined by problem I and problem II, respectively, such that $\mathcal{G}_{v}(h,f_p)=V$ and $\mathcal{G}_{w}(h,f_p)=W$ (like the Stokes law for a sphere $\F=6\pi\mu U$).

The result follows from the following two propositions:
\begin{proposition}\label{prop1}
Let $V\in\mathbb R$ be the number obtained from system I (i.e., $V=\mathcal{G}_v(h,f_p)$). Then there exists a positive constant $C>0$ such that
\begin{equation}\label{V}
V< Cf_ph.
\end{equation}
\end{proposition}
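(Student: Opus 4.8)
The plan is to use the linearity of the Stokes system to reduce system~I to the classical problem of two equal spheres translating along their line of centres, to read off $V$ explicitly in terms of the passive drag coefficient $\kappa_{\mathrm{pass}}(h)$, and then to bound $V$ by means of the asymptotics \eqref{k_pass} at $\beta=0$.

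First I would introduce the auxiliary translation flow $\tilde v$, the unique solution of
\[
\Delta\tilde v-\nabla p_{\tilde v}=0,\qquad \nabla\cdot\tilde v=0\ \text{in }\Omega_h,\qquad \tilde v=\d^i\ \text{on }\partial B_i\ (i=1,2),\qquad \nabla\tilde v\in\left[L^2(\Omega_h)\right]^3 ,
\]
which is the $\beta=0$ instance of the field $\tilde v$ of Remark~\ref{remark1}, so that by definition $\F(\tilde v)=\kappa_{\mathrm{pass}}(h)$. The first three lines of system~I (together with the square-integrable gradient condition inherited from $u=v+w$) form a linear homogeneous exterior Stokes problem with Dirichlet datum $V\d^i$; by uniqueness for that problem, $v=V\tilde v$ and $p_v=Vp_{\tilde v}$. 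Linearity of the drag functional then turns the fourth line of system~I into $\F(v)=V\F(\tilde v)=V\kappa_{\mathrm{pass}}(h)=f_p$. Since $\kappa_{\mathrm{pass}}(h)>0$, this equation has the single solution
\[
V=\mathcal{G}_v(h,f_p)=\frac{f_p}{\kappa_{\mathrm{pass}}(h)},
\]
which simultaneously shows that the map $\mathcal{G}_v$ is well defined.

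It then remains to prove a lower bound $\kappa_{\mathrm{pass}}(h)\ge c/h$ with a constant $c>0$ independent of $h$, for then \eqref{V} follows with $C=2/c$. For $0<h\le h_0\ll 1$ this is contained in the classical two-sided relation $\kappa_{\mathrm{pass}}(h)\asymp 1/h$ recalled in \eqref{k_pass} at $\beta=0$ (see \cite{CoxBre67,CooNei37}). For separations bounded away from zero one uses that $\kappa_{\mathrm{pass}}$ is continuous and strictly positive on $(0,\infty)$ (it is the viscous dissipation of $\tilde v$, which is nonzero because $\tilde v$ is not a rigid motion) and tends to the Stokes drag of a single sphere as $h\to\infty$; hence $h\mapsto h\,\kappa_{\mathrm{pass}}(h)$ has a positive lower bound on all of $(0,\infty)$, which yields the estimate. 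For the collision theorem one in fact only needs the bound for $h$ in the bounded range swept by the trajectory $h(t)$, where continuity and positivity of $\kappa_{\mathrm{pass}}$ together with the small-$h$ asymptotics already suffice.

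I do not expect a genuine obstacle here: the only substantive ingredient is the lower bound $\kappa_{\mathrm{pass}}(h)\ge c/h$ as $h\to 0^+$, which is precisely the classical fact quoted in \eqref{k_pass} and needs no new analysis --- in contrast to the companion bound for the singularly forced field $w$ of system~II, where the Lorentz reciprocal theorem must first be invoked to trade the $\delta$-forcing for a regular auxiliary problem before comparable estimates become available.
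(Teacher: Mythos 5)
Your proposal is correct and follows essentially the same route as the paper: the paper's proof is precisely the one-line identity $f_p=V\kappa_{\mathrm{pass}}$ combined with the classical two-sided asymptotics of the passive drag coefficient, and you have simply filled in the linearity/uniqueness argument that reduces system~I to the translating-spheres problem. One small point in your favour: you correctly identify that the inequality actually needed is the \emph{lower} bound $\kappa_{\mathrm{pass}}\ge c/h$ coming from \eqref{k_pass}, whereas the paper cites the upper bound \eqref{Cdh} at this step — the conclusion is unaffected since both directions are contained in the cited classical relation $\kappa_{\mathrm{pass}}\asymp 1/h$.
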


The result of proposition \ref{prop1} follows from the relations \eqref{Cdh} (see \cite{CoxBre67,CooNei37}) and
\begin{equation*}
f_p=V\kappa_{\text{pass}}.
\end{equation*}

\begin{proposition}\label{prop2}
Let $W\in\mathbb R$ be the number obtained in system II (i.e., $W=\mathcal{G}_w(h,f_p)$). Then the following inequality holds:
\begin{equation}\label{W}
W< 0.
\end{equation}
\end{proposition}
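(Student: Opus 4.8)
\medskip
\noindent\emph{Proof proposal.} The plan is to apply the Lorentz Reciprocal Theorem (Theorem~\ref{thm: LRT}) to the singular flow $w$ of system~II together with a suitably chosen \emph{regular} auxiliary flow, so as to convert system~II into a closed formula for $W$ whose sign can be read off directly. As the auxiliary flow I would take $\hat v$: the unique decaying solution of the \emph{homogeneous} Stokes system in $\Omega_{h}$ with $\hat v=\d^{i}$ on $\partial B_{i}$, $i=1,2$, i.e.\ the passive flow produced by the two unit spheres translating towards each other at unit speed. It carries no $\delta$-singularities, its drag coefficient $\F(\hat v)=\kappa_{\mathrm{pass}}(h)$ is finite and positive for each $h>0$ (cf.~\eqref{k_pass}), and by the $z\mapsto-z$ symmetry of the configuration its net force vanishes, so it decays like a stresslet and the boundary term at spatial infinity in the reciprocal identity vanishes.

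Pairing $w$ with $\hat v$ in the reciprocal identity over $\Omega_{h}$, I would evaluate the boundary contributions on each $\partial B_{i}$: since $\hat v=\d^{i}$ there and $\int_{\partial B_{i}}\sigma(w)\n\,ds\cdot\d^{i}=-\F(w)=0$ by the force-free condition of system~II (see \eqref{stress}), the term built from $\sigma(w)$ drops out; since $w=W\d^{i}$ there, the term built from $\sigma(\hat v)$ equals $W\big(\int_{\partial B_{i}}\sigma(\hat v)\n\,ds\cdot\d^{i}\big)=-W\kappa_{\mathrm{pass}}$ on each sphere; and the point-force term $\sum_{i}\delta(\x-\x_{p}^{i})f_{p}\d^{i}=\Delta w-\nabla p_{w}$ of system~II contributes $\sum_{i}f_{p}\,\d^{i}\!\cdot\hat v(\x_{p}^{i})$ (note $\x_{p}^{i}\in\Omega_{h}$, where $\hat v$ is smooth). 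Assembling these and using the symmetry once more gives
\begin{equation*}
    W=-\,\frac{f_{p}}{\kappa_{\mathrm{pass}}(h)}\,\d^{1}\!\cdot\hat v(\x_{p}^{1})
\end{equation*}
(equivalently $W=-f_{p}\kappa_{\mathrm{prop}}/\kappa_{\mathrm{pass}}$, as in Remark~\ref{remark1}, now with $\kappa_{\mathrm{prop}}=\d^{1}\!\cdot\hat v(\x_{p}^{1})$ expressed through the regular flow $\hat v$). Since $f_{p}>0$ and $\kappa_{\mathrm{pass}}(h)>0$, the assertion $W<0$ is equivalent to $\d^{1}\!\cdot\hat v(\x_{p}^{1})>0$: at the flagellum point $\x_{p}^{1}$ --- which sits on the negative $z$-axis behind $B_{1}$ --- the auxiliary flow has a positive $z$-component, the very direction in which $B_{1}$ moves.

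To establish this sign I would invoke the classical exact solution of the two-sphere Stokes problem: $\hat v$ is axisymmetric, its Stokes stream function is given in bispherical coordinates by an explicit infinite series, and the axial velocity $\d^{1}\!\cdot\hat v$ along the portion of the $z$-axis below $B_{1}$ can be read off from it term by term. The plan is to verify positivity there by combining the boundary value $\d^{1}\!\cdot\hat v=1$ at the south pole of $B_{1}$, the decay $\d^{1}\!\cdot\hat v\to0$ at infinity, and the sign pattern of the series terms. For a single sphere in isolation this reduces to the elementary value $\tfrac{1}{2\lambda}\big(3-\lambda^{-2}\big)>0$ (fluid behind a translating sphere is entrained in the direction of motion); the correction generated by the second, approaching sphere is, far away, a contractile stresslet whose axial flow below $B_{1}$ again points in $+z$, so the two effects reinforce.

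The reciprocal-theorem reduction is routine. The real obstacle is the last step: proving $\d^{1}\!\cdot\hat v(\x_{p}^{1})>0$ \emph{uniformly in} $h>0$ (and for every admissible flagellum length $\lambda$). Unlike the single-sphere formula, the two-sphere bispherical series is not obviously one-signed, and in principle the presence of $B_{2}$ could compete with the entrainment behind $B_{1}$ --- most dangerously as $h\to0$, when a strong lubrication flow builds up in the gap between the spheres. The crux is thus a uniform sign (or monotonicity) estimate for that series; an appealing alternative would be a maximum-principle-type argument for the axisymmetric stream function on the exterior part of the symmetry axis, or a screening estimate controlling the difference between $\hat v$ and the single-sphere flow at $\x_{p}^{1}$.
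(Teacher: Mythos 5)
Your reduction via the Lorentz Reciprocal Theorem is exactly the route the paper takes: the paper pairs $w$ with the homogeneous two--sphere flow (normalized to boundary velocity $W\d^i$ rather than $\d^i$, which is immaterial since the factor cancels) and arrives at the same identity $W=-f_p\,u^1_z(\x_p^1)/\F(u^1)$, see \eqref{important}. Up to that point your argument is correct and coincides with the paper's, including the observation that the force--free condition of system~II kills the $\sigma(w)$ boundary term and that the point forces contribute $f_p\sum_i\d^i\cdot \hat v(\x_p^i)$.

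The genuine gap is the step you yourself flag as ``the real obstacle'': the positivity $\d^1\cdot\hat v(\x_p^1)>0$ is asserted as a plan but never proved, and without it the proposition is not established -- this positivity \emph{is} the substantive content of the proof, isolated in the paper as the Lemma giving \eqref{posit}. Your heuristics do not close it: the single--sphere value you quote is misstated (on the axis at distance $1+\lambda$ from the center it is $\tfrac{1}{2(1+\lambda)}\bigl(3-(1+\lambda)^{-2}\bigr)$, whereas the expression $\tfrac{1}{2\lambda}(3-\lambda^{-2})$ is negative for small $\lambda$, so even the sign claim attached to it fails as written), and the far--field stresslet picture says nothing about the dangerous lubrication regime $h\to 0$ that you correctly identify. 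The paper closes the gap by a term--by--term sign analysis of the Stimson--Jeffery/Brenner bispherical series: on the axis the velocity collapses to $u_z(z_0,0)=-\tfrac{2}{c^2-z_0^2}(\cosh^{-1}\zeta_0-1)^{-1/2}\sum_n U_n(\zeta_0)$ with $c^2-z_0^2<0$ (see \eqref{u_z}), and the key trick is to use the non--penetration relation \eqref{nonpenetration} between the coefficients $b_n$ and $d_n$ to rewrite each mode in the form \eqref{U_m_in}; then $G_m>0$ (inequality \eqref{G_m}) together with an elementary maximum argument showing $\sinh(m+1)\xi\sinh(m-1)\alpha-\sinh(m-1)\xi\sinh(m+1)\alpha\le 0$ on $[0,\alpha]$ (inequality \eqref{the_second_term}) yields $U_n(\zeta_0)>0$ for every $n$, the flagellum endpoint corresponding to $\zeta_0\in(0,\alpha)$ in bipolar coordinates. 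This uniform--in--$h$ one--signedness of the series is precisely what your proposal leaves open; to complete your write--up you must supply it (or carry out the maximum--principle alternative you sketch, which the paper does not pursue).
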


The proof of proposition \ref{prop2} is relagated to Section \ref{sect:proof of the noslip result}.

Now \eqref{V}, \eqref{W} and $u=v+w$ imply that $-h'(t)=V+W<Cf_ph(t)$. Substituting it to \eqref{balance} we have
$$
h'(t)> -Cf_ph(t),
$$
which implies the statement of the theorem. $\square$

\subsection{Finite time collisions for swimmers with Navier boundary conditions}
\label{sect:result for Navier}

\begin{theorem} \label{thm_navier} Consider coupled PDE/ODE model \eqref{stokes},\eqref{navier}, \eqref{balance} and $m>0$. Then there exists a positive constant $C>0$ which may depend on $h(0)$, $m$, $\beta$ and $\lambda$ such that if $h'(0)>C$, then there exits time $T_{\text{coll}}>0$ such that $h(T_{\text{coll}})=0$.
\end{theorem}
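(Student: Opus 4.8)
The plan is to reduce the collision problem to a differential inequality for $h(t)$ using the decomposition of the drag force from Remark \ref{remark1}, and then to establish the two crucial facts that make collision possible in the Navier case: (i) the passive drag coefficient $\kappa_{\text{pass}}$ grows at most like $\frac{1}{\beta}\ln\frac{1}{h}$ as $h\to 0$ (this is the integrable-singularity estimate \eqref{k_navier} of \cite{GerHil11}), and (ii) the propulsion drag coefficient $\kappa_{\text{prop}}$ stays bounded, so that the ``effective propulsion'' term $f_p(1-\kappa_{\text{prop}})$ in \eqref{expl_be} does not blow up and, in particular, does not overwhelm the motion term in a way that would stop the swimmers. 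Step (ii) is where the Poincar\'e-type inequality for the symmetrized gradient (Proposition~\ref{prop: poincare special}) and the variational representation of the drag force (Proposition~\ref{prop: variation principle}) enter: one writes $\kappa_{\text{prop}}=\F(\tilde w)$ as a minimum of a Dirichlet-type energy over divergence-free fields satisfying the Navier slip condition, bounds that energy from above by inserting the explicit singular test field $u_\Phi$ built from the Oseen tensor (whose energy is finite and independent of $h$ once $\lambda>0$ keeps the point forces at positive distance from the spheres), and uses the Poincar\'e inequality to control the boundary slip contribution by the bulk energy. This yields $|\kappa_{\text{prop}}|\le C(\beta,\lambda)$ uniformly in small $h$.

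With these two ingredients in hand I would argue as follows. Integrating \eqref{expl_be} once (recall $m>0$), write it as
\begin{equation*}
m h''(t)=-\kappa_{\text{pass}}(h(t))\,h'(t)-f_p\bigl(1-\kappa_{\text{prop}}(h(t))\bigr).
\end{equation*}
Since $\kappa_{\text{pass}}>0$ and $h'(t)<0$ while the swimmers approach, the damping term $-\kappa_{\text{pass}}h'$ is positive; to get a clean estimate I would instead multiply by $h'(t)$ or use the antiderivative $K(h):=\int_h^{h(0)}\kappa_{\text{pass}}(s)\,ds$, which by \eqref{k_navier} satisfies $K(h)\le \frac{C}{\beta}\,h\bigl(1+\ln\frac{1}{h}\bigr)\le C'(\beta)$ for $h$ small --- the key point being that $K(0)<\infty$, i.e.\ the passive drag is an \emph{integrable} singularity. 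Then
\begin{equation*}
m h'(t)=m h'(0)-\frac{d}{dt}K(h(t))\cdot(\text{sign bookkeeping})-f_p\int_0^t\bigl(1-\kappa_{\text{prop}}\bigr)\,ds,
\end{equation*}
so that, using $K(h(t))-K(h(0))$ bounded and $|1-\kappa_{\text{prop}}|\le C(\beta,\lambda)$, we obtain $h'(t)\le h'(0)+\frac{1}{m}\bigl(C'(\beta)+f_p\,C(\beta,\lambda)\,t\bigr)$ with the wrong-sign terms collected; more to the point, choosing $|h'(0)|=-h'(0)>C$ large enough (depending on $h(0),m,\beta,\lambda$) forces $h'(t)$ to stay below a fixed negative constant $-c_0$ on the time interval needed, and hence $h(t)\le h(0)-c_0 t$, which reaches $0$ at a finite $T_{\text{coll}}\le h(0)/c_0$. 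One must check that the estimates used to bound $\kappa_{\text{pass}}$ and $\kappa_{\text{prop}}$ remain valid up to the collision, i.e.\ uniformly for $h\in(0,h(0)]$ in the regime $h<\beta\ll 1$; restricting attention to small $\beta$ as in \eqref{k_navier}, and to the initial configuration already having $h(0)<\beta$, handles this, and the case $h\ge\beta$ is covered by \eqref{k_pass} on a time interval where $\kappa_{\text{pass}}\asymp 1/h$ still integrates to something finite over any range bounded away from where the dynamics would have to be re-examined.

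The main obstacle, as I see it, is step (ii): controlling $\kappa_{\text{prop}}$ uniformly as $h\to 0$. Naively the fluid flow $\tilde w$ is forced by point sources at distance $\lambda$ behind each sphere, and as the two spheres approach, the thin gap between them could conceivably produce a large back-reaction drag, mimicking what happens to $\kappa_{\text{pass}}$. The resolution is that the point forces are screened from the gap --- they sit on the far sides of the spheres --- so the singular part of $\tilde w$ near the gap is harmless; making this precise requires the variational upper bound with a carefully chosen test field that already satisfies the (homogeneous) Navier condition on $\partial B_i$ and matches the Oseen singularities away from the gap, plus the symmetrized-gradient Poincar\'e inequality of Proposition~\ref{prop: poincare special} to convert the resulting energy bound into a bound on the boundary traction defining $\F(\tilde w)$. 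A secondary technical point is the bookkeeping of signs and of the inertial term: with $m>0$ the ODE \eqref{expl_be} is second order, so one genuinely needs the ``large initial velocity'' hypothesis to dominate the decelerating effect of $f_p(1-\kappa_{\text{prop}})$ over the (short) collision time, and one should verify $T_{\text{coll}}$ is reached before $h'$ could change sign --- which is exactly what the lower bound $h'(0)>C$ guarantees.
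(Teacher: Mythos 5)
Your strategy is sound and at its core coincides with the paper's: remove the point-force singularity by subtracting a cut-off Oseen field, bound the drag of the regular remainder via the variational principle (Proposition \ref{prop: variation principle}) together with the Poincar\'e-type inequality (Proposition \ref{prop: poincare special}), exploit that $\int_0^{h(0)}\kappa_{\text{pass}}(s)\,ds<\infty$ because the singularity $\frac{1}{\beta}\ln\frac{1}{h}$ in \eqref{k_navier} is integrable, and integrate the momentum balance once so that a sufficiently negative $h'(0)$ keeps $h'$ below a fixed $-c_0$ until $h$ vanishes. The genuine difference is the decomposition. You split $u$ into the passive and propulsion parts of Remark \ref{remark1} and claim a uniform bound $|\kappa_{\text{prop}}|\le C(\beta,\lambda)$, whereas the paper rescales by $K\sim h'$ and folds the rigid boundary datum and the regularized point force into a single inhomogeneous-boundary-condition problem for $\overline{u}$, proving $\F(\overline{u})\le C(|\ln h|+K^2)$ in one stroke. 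Your split is conceptually cleaner and, importantly, only requires \emph{boundedness} of $\kappa_{\text{prop}}$, not the much harder sign condition \eqref{prop_less_than_one} that the paper leaves open; but it costs you an ingredient you do not supply: Proposition \ref{prop: variation principle}(ii) extracts the drag precisely from the inhomogeneous boundary datum $\d^i$, so for the propulsion problem $\tilde w$, which has homogeneous data $\tilde w\cdot\n=0$, the energy bound does not by itself control $\F(\tilde w)$ --- you would need a separate drag representation (e.g.\ a Lorentz-reciprocity pairing of $\tilde w$ with the passive flow, in the spirit of the no-slip argument of Section \ref{sect:proof of the noslip result}). The paper's combined formulation sidesteps this. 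Your handling of the regime $h\ge\beta$ is muddled but harmless, since $\kappa_{\text{pass}}\asymp 1/h$ is integrable on $[\beta,h(0)]$ and so $K(h)$ is bounded on all of $(0,h(0)]$ with no case distinction needed. With the missing drag representation supplied, your argument closes and yields the same conclusion by essentially the same mechanism.
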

$T_{\text{coll}}$ is time of collision. Quantative etimates on $T_{\text{coll}}$ which can be compared with measurements have been obtained in \cite{PotGyrAraBer13}
(see also Remark \ref{remark: colltime}). The proof of Theorem \ref{thm_navier} is relagated to Subsection \ref{proof of navier}. The proof is based on variational principle and the Poincare type inequality (see Subsection \ref{auxiliary result}).

\begin{remark} The Theorem \ref{thm_navier} states that swimmers collide if the initial speed is sufficiently large.
Thus, the result on passive spheres is extended on active swimmers.
But one can expect that swimmers must collide for arbitrary initial speed and separation distance. This is due to the propulsion force pushing swimmers towards each other which is absent in the case of passive spheres.
In order to prove collisions without any restrictions on initial conditions and $m\geq 0$ one need to show
\begin{equation}\label{prop_less_than_one}
1-\kappa_{\text{prop}}>l(\lambda,\beta),
\end{equation}
where $\kappa_{\text{prop}}$ was introduced in Section 3 and represents the drag force on a swimmer generated by self-propulsion $f_p=1$ of both swimmers. The function $l(\lambda,\beta)$ is a strictly positive function, it may depend on $\lambda$ and $\beta$, but does not depend on the swimmer's separation $h$.
The inequality \eqref{prop_less_than_one} means that the drag force due to propulsion mechanism ($\delta$-functions in the Stokes equation) is not greater than the actual propulsion.
In other words, the pushing mechanism really pushes swimmers towards each other.

\begin{figure}\label{Fig 2}
\centerline{\includegraphics[height=3 cm]{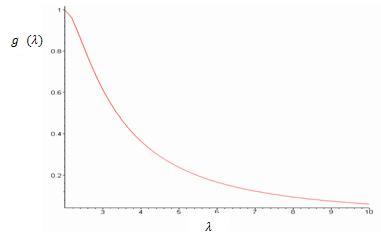}}
\caption{Dependence of $\kappa_{\text{prop}}$ on flagella length $\lambda$ with $h=0.01$ and $\beta=0.1$}
\end{figure}
For fixed roughness parameter $\beta=0.1$ and small separation distance $h=0.01$ the force $\kappa_{\text{prop}}$ is a function of tail length $\lambda$, denote it by $g(\lambda)$.
Numerics suggest that inequality \eqref{prop_less_than_one} holds for all $\lambda,\beta>0$. It is optimal in the following sense: when $\lambda\rightarrow 0$ the function $g(\lambda)=\kappa_{\text{prop}}\to  1$ (see Fig 2). 
This makes the inequality \eqref{prop_less_than_one} difficult to prove.
\end{remark}
\begin{remark}\label{remark: colltime}
It is also interesting to estimate the time before collision $T_{\text{coll}}$
in terms of parameter $\beta$.
Time before collision $T_{\text{coll}}$ can be evaluated by the following formula
\begin{equation}\label{t_coll}
    T_{\text{coll}}=\int_{0}^{h(0)}\frac{dh}{U(h)},
\end{equation}
where $U(h)=-h'(t)$.
The following estimate for the collision time holds ($f_{\text{ext}}=1,\;h(0)=1$):
\begin{equation}
T_{\text{coll}}\sim 1/\beta \ln (1/h).
\end{equation}
To estiblish the formula \eqref{t_coll}, one can use \eqref{expl_be}, the assumption \eqref{prop_less_than_one} and the relation for \eqref{k_navier}. Numerics on collision time are performed in our paper \cite{PotGyrAraBer13}.
\end{remark}
\section{Proof of proposition \ref{prop2}}
\label{sect:proof of the noslip result}

\noindent We use the following well-known theorem
\begin{theorem}[Lorentz Reciprocal Theorem
] \label{thm: LRT}Assume that equalities $\Delta u^1-\nabla p^1=-F_1$, $\Delta u^2-\nabla p^2=-F_2$ and $\nabla\cdot u^1=0$, $\nabla \cdot u^2=0$ hold in domain $\Omega$. Then
\begin{equation}\label{LRT}
\int\limits_{\Omega}F_1\cdot u^2d\x-\int\limits_{\partial\Omega}\sigma(u^1,p^1)\n\cdot u^2ds=
\int\limits_{\Omega}F_2\cdot u^1d\x-\int\limits_{\partial\Omega}\sigma(u^2,p^2)\n\cdot u^1ds.
\end{equation}
Here $\n$ is an inward (for $\Omega$) normal vector.
\end{theorem}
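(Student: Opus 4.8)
The final statement is the classical reciprocal identity for the Stokes operator — the exact analogue of Green's second identity for the Laplacian. I would prove it the same way: manufacture an exact divergence identity for the pair of flows and integrate it over $\Omega$. In order: (1) a product-rule identity; (2) cancellation of the strain-rate terms (the one real step); (3) substitution of the equations; (4) the divergence theorem.

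Write $\sigma^j:=\sigma(u^j,p^j)$, a symmetric tensor. From $\nabla\cdot(\sigma^1u^2)=(\nabla\cdot\sigma^1)\cdot u^2+\sigma^1:\nabla u^2$ and the same with the indices swapped, subtract:
\[
\nabla\cdot\bigl(\sigma^1u^2-\sigma^2u^1\bigr)=(\nabla\cdot\sigma^1)\cdot u^2-(\nabla\cdot\sigma^2)\cdot u^1+\bigl(\sigma^1:\nabla u^2-\sigma^2:\nabla u^1\bigr).
\]
The parenthesized difference vanishes: by symmetry $\sigma^1:\nabla u^2=\sigma^1:D(u^2)$; the pressure part of $\sigma^1$ contracts with $D(u^2)$ to a multiple of $\nabla\cdot u^2=0$; and what remains, $2D(u^1):D(u^2)$, is symmetric in $1\leftrightarrow2$, so $\sigma^1:\nabla u^2=\sigma^2:\nabla u^1$. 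This ``self-adjointness'' of the Stokes operator in the pairing is the only substantive point. Since $\nabla\cdot\sigma(u,p)=\Delta u-\nabla p$ when $\nabla\cdot u=0$, the hypotheses $\Delta u^j-\nabla p^j=-F_j$ give $\nabla\cdot\sigma^j=-F_j$, and the identity collapses to $\nabla\cdot(\sigma^1u^2-\sigma^2u^1)=F_2\cdot u^1-F_1\cdot u^2$. Integrating over $\Omega$ and using the divergence theorem — with $\n$ the \emph{inward} normal, so $\int_\Omega\nabla\cdot X\,d\x=-\int_{\partial\Omega}X\cdot\n\,ds$ — together with $(\sigma^j u^k)\cdot\n=\sigma^j\n\cdot u^k$ (symmetry of $\sigma^j$), one gets $-\int_{\partial\Omega}(\sigma^1\n\cdot u^2-\sigma^2\n\cdot u^1)\,ds=\int_\Omega(F_2\cdot u^1-F_1\cdot u^2)\,d\x$, which is \eqref{LRT} after rearrangement.

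For the bare statement there is essentially no obstacle; the care is all in the way the theorem is used. In the applications $\Omega=\Omega_{h}$ is unbounded, so one runs the integration on $\Omega\cap\{|\x|<R\}$ and sends $R\to\infty$, the extra term over $\{|\x|=R\}$ vanishing because the relevant flows (Problems~I and~II) are force-free in the far field, whence $u=O(|\x|^{-2})$, $\sigma=O(|\x|^{-3})$ and the flux through $\{|\x|=R\}$ is $O(R^{-3})$. When some $F_j$ carries $\delta$-concentrations (as for the flow $w$ of Problem~II, driven by point sources at $\x_p^1,\x_p^2$), one either reads the identity distributionally or excises small balls about those points; since the singular part of $w$ is a pair of stokeslets, the fluxes through the small spheres converge exactly to the point evaluations $\sum_i f_p\,\d^i\cdot u^k(\x_p^i)$ that constitute $\int_\Omega F_j\cdot u^k\,d\x$. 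Beyond this, the one thing to watch is that the sign conventions stay mutually consistent — the definition of $\sigma$, the sign in $\Delta u^j-\nabla p^j=-F_j$, and the orientation of $\n$ — since flipping any one of them flips \eqref{LRT}.
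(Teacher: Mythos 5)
The paper offers no proof of this theorem: it is stated as ``well-known'' and immediately applied, so there is nothing to compare your argument against line by line. Your proof is the standard one and is correct --- the divergence identity $\nabla\cdot(\sigma^1u^2-\sigma^2u^1)$, the cancellation $\sigma^1:\nabla u^2=\sigma^2:\nabla u^1$ via symmetry of $\sigma$ and incompressibility, and the divergence theorem with the inward normal --- and your remarks on the unbounded domain (truncate at $|\x|=R$, use the stokeslet decay to kill the far-field flux) and on reading the $\delta$-forcing distributionally are exactly the points needed to justify the way the theorem is actually invoked in Section 5. One concrete caution, which you half-anticipate: with the paper's own definition $\sigma(u,p)=-2D(u)+p\mathrm{I}$ one gets $\nabla\cdot\sigma^j=-\Delta u^j+\nabla p^j=+F_j$, not $-F_j$ as you write, and tracing that through flips the sign of both boundary integrals in \eqref{LRT}. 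The identity as stated in \eqref{LRT} (and as you derive it) is the one corresponding to the conventional stress $\sigma=2D(u)-p\mathrm{I}$; so either the paper's definition of $\sigma$ or the signs in \eqref{LRT} carries a typo, and your derivation silently adopts the conventional choice. Since you explicitly flag that flipping any one of the three conventions flips the identity, this is a bookkeeping discrepancy with the paper's notation rather than a gap in your proof.
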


Let us take $\Omega=\Omega_h$, $u^2=w$, $p^2=p_w$ and $F_2=-\sum\limits_{i=1,2}\delta(\x-\x_p^i)f_p\d^i$. Let $u^1$ be such function that $\Delta u^1-\nabla p^1=0$ (in other words, $F_1=0$), and $u^1$ satisfies the following boundary conditions:
$$
    u^1=W\d^i  \text{ on } \partial B_i,\;\;i=1,2.
$$
From \eqref{LRT} we get:
$$
    W\sum\limits_{i=1,2}\d_i\cdot\int\limits_{\partial B_i}\sigma(u^1,p^1)\n ds=
    f_p\sum\limits_{i=1,2}\d^i\cdot u^1(\x_p^i).
$$
The following equality holds:
$$
    \int\limits_{\partial B_i}\sigma(u^1,p^1)\n = \F(u^1)\d^i,
$$
and due to symmetry of the problem for $(u^1,p^1)$ we have
$$
    \d^1\cdot u^1(\x_p^1)=\d^2\cdot u^1(\x_p^2)=-u^1_z(\x_p^1), \;\;i=1,2,
$$
where $u^1_z(\x_p^1)$ is a $z$-component of $u^1(\x_p^1)$.
Thus, we get
\begin{equation} \label{important}
    W=-\frac{f_p}{\F(u^1)}u^1_z(\x_p^1).
\end{equation}
Thus, it remains to prove positivity of $u_{z}(\x_p^1)$.
\begin{lemma}
The following inequality holds
\begin{equation}\label{posit}
    u^1_z(\x_p^1)>0.
\end{equation}
\end{lemma}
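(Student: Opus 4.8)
\noindent\emph{Proof strategy.} The plan is to reduce \eqref{posit} to a statement about the explicit classical solution of the two--sphere Stokes problem and then read off the sign. By linearity of \eqref{stokes}--\eqref{noslip}, and since only the sign of $u^1_z(\x_p^1)$ matters, we normalise $W=1$ in the boundary condition defining $u^1$, so that $u^1$ is the Stokes flow produced by $B_1$ translating with velocity $\d^1$ and $B_2$ with velocity $\d^2$, i.e.\ by two equal rigid spheres approaching each other at unit speed along the $z$--axis. This configuration is axisymmetric about the $z$--axis and, moreover, invariant under the reflection $z\mapsto -z$ composed with the interchange of the two spheres; hence $u^1_z$ is odd in $z$, so $u^1_z\equiv 0$ on $\{z=0\}$, which therefore acts as a shear--free symmetry plane. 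It thus suffices to analyse the half--problem, namely a single sphere $B_1$ translating toward the shear--free plane $\{z=0\}$, and to show that the axial velocity it generates is positive at $\x_p^1$, a point of $\Omega_h$ on the symmetry axis lying on the far side of $B_1$ from that plane.

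One way to finish is to invoke the closed form. The flow generated by two rigid spheres translating along their line of centres is known explicitly in bispherical coordinates (the same representation underlying the estimate \eqref{Cdh} for $\kappa_{\mathrm{pass}}$, cf.\ \cite{CoxBre67,CooNei37}), and its restriction to the symmetry axis is an explicit series for $u^1_z(0,z)$; one then checks that on the segment of the axis behind $B_1$ this series is positive, which is \eqref{posit}. A more transparent route, which I would prefer, is a second application of the Lorentz Reciprocal Theorem. Let $\mathbf v$ (with pressure $q$) be the Stokes flow in $\Omega_h$ driven by a unit point force along $\d^1$ placed at $\x_p^1$, with both spheres held rigidly at rest and $\mathbf v\to 0$ at infinity, and let $\mathbf G_i=\int_{\partial B_i}\sigma(\mathbf v,q)\n\, ds$ (with $\n$ the normal of $\partial B_i$ used in \eqref{LRT}) be the hydrodynamic force exerted by $\mathbf v$ on $B_i$. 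Pairing $u^1$, for which the body force vanishes, with $\mathbf v$ in \eqref{LRT} exactly as in the derivation of \eqref{important}, and using $u^1=\d^i$ and $\mathbf v=0$ on $\partial B_i$, every boundary term carrying $\sigma(u^1,p^1)$ disappears and one is left with
\begin{equation*}
u^1_z(\x_p^1)=\d^1\cdot\mathbf G_1+\d^2\cdot\mathbf G_2=(\mathbf G_1)_z-(\mathbf G_2)_z.
\end{equation*}
Since the point force drives fluid in the $+z$ direction along the whole axis, both $\mathbf G_1$ and $\mathbf G_2$ point in the $+z$ direction; because $B_1$ lies at distance $\lambda$ from $\x_p^1$ while $B_2$ lies at distance $2+2h+\lambda$, the nearer sphere feels the larger force and the right--hand side is positive.

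The main obstacle is to make the comparison $(\mathbf G_1)_z>(\mathbf G_2)_z$ rigorous uniformly in the separation $h\in(0,h(0)]$, whose small values are the interesting regime. When $\lambda$ is large the two terms nearly cancel, so one cannot simply discard $(\mathbf G_2)_z$; one needs a lower bound on the force on the near sphere, available because $\x_p^1$ remains at the fixed distance $\lambda$ from $\partial B_1$ and $B_2$ acts only as a backstop that does not reduce it, together with an upper bound on the force on the far sphere, obtained by estimating the ambient (Stokeslet plus $B_1$--image) flow at $\x_c^2$ through Fax\'en's law, which is $O\big((2+2h+\lambda)^{-1}\big)$, and then controlling the remaining mutual--interaction corrections so that the net stays strictly positive. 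Equivalently, in the closed--form approach the obstacle is the sign bookkeeping in the bispherical series together with its uniform convergence as $h\to 0$. In either case the physics is transparent: the fluid directly behind a sphere that moves toward its mirror partner is entrained in the direction of the motion.
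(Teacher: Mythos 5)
Your setup is sound: the normalisation $W=1$, the axisymmetry, and the reflection symmetry are all correct, and your first route (explicit bispherical representation, then read off the sign on the axis) is exactly the paper's approach. But in both of your routes the actual mathematical content of the lemma is left as a one-line assertion. In route (a) you write that ``one then checks that on the segment of the axis behind $B_1$ this series is positive'' --- that check \emph{is} the proof. The paper's argument reduces $u^1_z(\x_p^1)$ to $-\tfrac{2}{c^2-z_0^2}(\cosh^{-1}\zeta_0-1)^{-1/2}\sum_n U_n(\zeta_0)$ and then must show $U_n(\zeta_0)>0$ for every $n$; this requires the nonpenetration relation between the coefficients $b_n$ and $d_n$, the explicit positivity \eqref{G_m} of $G_m$, and a small maximum-principle argument for $f(\xi)=\sinh(m+1)\xi\,\sinh(m-1)\alpha-\sinh(m-1)\xi\,\sinh(m+1)\alpha$ giving \eqref{the_second_term}. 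None of this is ``sign bookkeeping'' one can wave at: the individual coefficients $b_n$, $d_n$ do not have a uniform sign, and only the particular combination \eqref{U_m_in} does. Your proposal supplies none of it.

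Route (b) --- a second application of \eqref{LRT} against the constrained Stokeslet $\mathbf v$ --- is a genuinely different and appealing idea, and the resulting identity $u^1_z(\x_p^1)=(\mathbf G_1)_z-(\mathbf G_2)_z$ (up to the sign conventions in \eqref{important}) is correct. However, it trades the original sign question for the claim that both induced forces point in $+z$ with $(\mathbf G_1)_z>(\mathbf G_2)_z$, and you yourself flag that this is the obstacle. The Stokes system has no maximum or comparison principle, so ``the fluid is driven in the $+z$ direction along the whole axis, hence both forces point in $+z$'' is not a valid inference --- the backflow around the near sphere and the screening it produces can in principle reverse the sign of the force on the far sphere, and when $\lambda$ is large the two terms nearly cancel, so the conclusion is exactly as delicate as the original statement. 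Your Fax\'en-type sketch does not control the interaction corrections, in particular not uniformly as $h\to 0$, which is the only regime that matters for the no-collision conclusion. As it stands the proposal restates the lemma in two equivalent forms and supplies the physical intuition, but does not prove it.
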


\noindent
{\it P r o o f.} We recall that $u^1$ satisfies the following system
\begin{eqnarray}
\nonumber &&
\left\{\begin{array}{l}
\Delta u^1 -\nabla p^1 =0, \text{  in  }\Omega_{h}\\
\nabla\cdot u^1=0, \text{  in  }\Omega_{h}
\end{array}\right.\\
&&\nonumber
u^1 = W\d^i, \text{  on  }\partial B_i,\;\;i=1,2.
\end{eqnarray}
The proof is divided into three parts. First, the result on p. 247 is \cite{Bre61} is used to write the exact solution of system above. Second, we calculate $u^1$ at a point $\x_p^1$, and, finally, a necessary estimate is obtained.
\vskip 2mm
\noindent{STEP 1. }{\it Exact solution.} Introduce cylindrical coordinates for $\x=(x,y,z)$
$$
\begin{array}{l}
    x=\rho\cos\phi,\\y=\rho\sin\phi,\\z=z.
\end{array}
$$
Denote  $e_{\rho}=(\cos\phi,\sin\phi,0)$ and $e_z=(0,0,1)$. Due to the axial symmetry of the problem we have $u^1=u_{\rho}e_{\rho}+u_{z}e_z$ where scalar functions $u_{\rho}$ and $u_z$ do not depend on $\phi$.
Following \cite{Bre61} we introduce the stream function $\psi=\psi(\rho,z)$
$$
    u_z(z,\rho)
    =
    -\frac{1}{\rho}\frac{\partial}{\partial \rho}\psi(z,\rho),\;\;\;u_{\rho}(z,\rho)=\frac{1}{\rho}\frac{\partial}{\partial z}\psi (z,\rho).
$$

In order to write function $\psi$ we introduce the bipolar coordinates $\zeta\in[0,+\infty)$, $\mu\in[0,\pi]$ (we need the case $z>0$ only)
$$
z=c\frac{\sinh\zeta}{\cosh\zeta-\cos\eta},\;\;\rho=c\frac{\sin \eta}{\cosh\zeta-\cos\eta},
$$
where $c=\sinh\alpha$ and $\alpha$ are such positive number that $\cosh \alpha =1+h$.
Note that due to the following equality
$$
(z-c\coth \zeta)^2+\rho^2=(c \;\mathrm{cosech}\zeta)^2
$$ one can easily verify that surface $\left\{\zeta=\alpha\right\}$ is the sphere $\partial B_1$. Also we will need the formula:
\begin{equation}\label{ln}
\zeta+i\eta=\ln\left(\frac{\rho+i(z+c)}{\rho+i(z-c)}\right).
\end{equation}

The function $\psi$ may be written as follows (see \cite{StiJef26,Bre61})
$$
\psi(\zeta,\eta)=(\cosh\zeta-\cos\eta)^{-3/2}\sum\limits_{n=0}^{\infty}U_n(\zeta)C_{n+1}^{-1/2}(\cos\eta),
$$
where
\begin{eqnarray}
U_n(\zeta)&=&b_n\sinh(n-\frac{1}{2})\zeta+d_n\cosh(n+\frac{3}{2})\zeta,\nonumber\\
C_{n+1}^{-1/2}(x)&=&\frac{P_{n-1}(x)-P_{n+1}(x)}{2n+1}.\nonumber
\end{eqnarray}
Here $P_n(x)$ are the Legendre polynomials
$$
P_n(x)=\frac{1}{2^nn!}\frac{d^n}{dx^n}(x^2-1)^{n}
$$
and the coefficients $b_n$ and $d_n$ are given by the following formulas
\begin{eqnarray}
\nonumber b_n&=&WR^2\sinh^2\alpha\frac{n(n+1)}{\sqrt{2}(2n-1)}\times\\&&\times
\nonumber\left[
\frac{4\cosh^2(n+\frac{1}{2})\alpha+2(2n+1)\sinh^2\alpha}{2\sinh(2n+1)\alpha-(2n+1)\sinh2\alpha}
-1\right],\\
\nonumber d_n&=&WR^2\sinh^2\alpha\frac{n(n+1)}{\sqrt{2}(2n+3)}\times\\&&\times
\nonumber\left[1-
\frac{4\cosh^2(n+\frac{1}{2})\alpha-2(2n+1)\sinh^2\alpha}{2\sinh(2n+1)\alpha-(2n+1)\sinh2\alpha}
\right].
\end{eqnarray}
\vskip 2mm
\noindent{STEP 2.}{ \it The calculation of $u^1(\x_p^1)$.} We need to calculate $u^1$ when $z_0=1+h+\xi$ and $\rho\rightarrow 0+$. In bipolar coordinates it corresponds to
$$
\zeta_0= \ln \frac{z_0+c}{z_0-c} \;\;\text{ and }\;\;\eta_0=0.
$$

Then using \eqref{ln} (to obtain $\frac{\partial \zeta}{\partial \rho}$ and $\frac{\partial \eta}{\partial \rho}$), $P_n(1)=1$, $P'_n(1)=n(n+1)/2$ and the equality
$$
\frac{\partial \psi}{\rho\partial\rho}=\frac{\partial \zeta}{\rho\partial\rho}
\frac{\partial\psi}{\partial \zeta}+\frac{\partial \eta}{\partial\rho}\frac{\partial\psi}{\rho\partial \eta}
$$
we obtain
\begin{eqnarray}\nonumber
u_z(z_0,0)&=&-\lim\limits_{\rho\rightarrow 0+}\frac{1}{\rho}\frac{\partial}{\partial \rho} \psi(z_0,\rho)\\&=&-\frac{2}{c^2-z_0^2}(\cosh^{-1}\zeta_0-1)^{-1/2}\sum\limits_{n=0}^{\infty}U_n(\zeta_0).\label{u_z}
\end{eqnarray}

Since $c^2=h(2+h)$ we have that $c^2-z_0^2<0$ and the statement of proposition follows if we prove that $U_n(\zeta_0)>0$ for all $n\in\mathbb N$.
\vskip 2mm

\noindent{STEP 3. }{\it Positivity of $U_n(\zeta_0)$.} The nonpenetration condition $(u-\d^i)\cdot n=0$ implies the following relation between coefficients $b_n$ and $d_n$ (see, e.g., formula (42) in \cite{ReeMor74}):
\begin{equation}\label{nonpenetration}
b_n =G_m-d_n\frac{\sinh(m+1)\alpha}{\sinh(m-1)\alpha},
\end{equation}
where $m=n+1/2$ and
$$
G_m=\frac{c^2}{2\sqrt{2}}\frac{m^2-1/4}{m^2-1}\left[\frac{(m+1)e^{-(m-1)\alpha}-(m-1)e^{-(m+1)\alpha}}{\sinh(m-1)\alpha}\right].
$$
The formula for $U_n(\xi)$ has the following form:
\begin{equation}\label{U_m_in}
U_n(\xi)=\left[G_m+d_n\left\{\frac{\sinh(m+1)\xi}{\sinh (m-1)\xi}-\frac{\sinh(m+1)\alpha}{\sinh (m-1)\alpha}\right\}\right]\sinh(m-1)\xi.
\end{equation}

\noindent Note that
\begin{equation*}
(m+1)e^{-(m-1)\alpha}-(m-1)e^{-(m+1)\alpha}=2e^{-m\alpha}(m\sinh \alpha+\cosh \alpha)>0
\end{equation*}
Thus,
\begin{equation}\label{G_m}
G_m>0.
\end{equation}

\noindent Introduce $f(\xi):=\sinh(m+1)\xi\sinh(m-1)\alpha-\sinh(m-1)\xi\sinh (m+1)\alpha$. Consider $\xi\in[0,\alpha]$. One can see that $f(0)=f(\alpha)=0$. Consider now $M:=\max\limits_{\xi\in[0,\alpha]}f(\xi)=f(\xi_0)$. Assume that $M>0$. Then \begin{eqnarray}
f''(\xi_0)&=&(m+1)^2\sinh(m+1)\xi\sinh(m-1)\alpha-(m-1)^2\sinh(m-1)\xi\sinh (m+1)\alpha\nonumber \\&&\nonumber >(m-1)^2f(\xi_0)>0,
\end{eqnarray}
what is impossiple for maximum. It means that $f(\xi)\leq 0$ for $\xi\in (0,\alpha)$. Thus,
\begin{equation}\label{the_second_term}
\frac{\sinh(m+1)\xi}{\sinh (m-1)\xi}-\frac{\sinh(m+1)\alpha}{\sinh (m-1)\alpha}<0.
\end{equation}
Combining \eqref{G_m} and \eqref{the_second_term} we obtain from \eqref{U_m_in} that $U_n(\xi_0)>0$.

\section{Proof of the collision result for Navier BC}
\label{sect:proof of the Navier result}

\subsection{Auxiliary statements}\label{auxiliary result}

Consider the following Stokes problem for smooth force density ${\bf F}$:
\begin{equation}\label{stokes_aux_th2}
\left\{\begin{array}{l}
-\Delta u+\nabla p={\bf F},\\
\nabla \cdot u =0,\\
(u-\d^i)\cdot \n=0,\\
(u-\d^i)\times \n=-2\beta\left[D(u)\n\right]\times \n .
\end{array}\right.
\end{equation}
Define energy functional:
\begin{equation}
\mathcal{E}_{\bf F}(v)=2\int|D(v)|^2+\frac{1}{\beta}\sum_{i=1,2}\int_{\partial B_i}|v-\d^i|^2-\int {\bf F}v
\end{equation}
defined on the following class of functions:
\begin{equation*}
\mathcal{A}=\left\{v| \;(v-\d^i)\cdot \n =0 \text{ on }\partial B^i\right\}.
\end{equation*}
One can easily prove the following statement:
\begin{proposition}\label{prop: variation principle}
Let $u$ solve Stokes problem \eqref{stokes_aux_th2}. Then the following statements hold true:
\begin{list}{}{}
\item{(i)} $u$ is a minimizant of $\mathcal{E}_{\bf F}$ on class $\mathcal{A}$:
\begin{equation*}
\min\limits_{v\in\mathcal{A}}\mathcal{E}_{\bf F}(v)=\mathcal{E}_{\bf F}(u).
\end{equation*}
\item{(ii)} Drag force $\mathcal{F}(u)=\int_{\partial B^i}\sigma(u,p)\n\cdot \d^i$ may be evaluated by the following formula:
\begin{equation*}
\mathcal{F}(u)=\mathcal{E}_{\bf F}(u)+\int {\bf F} u.
\end{equation*}
\end{list}
\end{proposition}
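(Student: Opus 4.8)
The plan is to obtain (i) by the direct method (vanishing first variation plus convexity) and (ii) by testing the Stokes system against $u$ itself; here is the outline. Throughout, $\A$ is understood inside the class of \emph{divergence-free} fields in the natural energy space of $\mathcal{E}_{\bf F}$ (finite $\int_{\Omega_h}|D(v)|^2$, finite $\sum_i\int_{\partial B_i}|v-\d^i|^2$, decay at infinity); the pressure $p$ of the Stokes solution then enters only as the Lagrange multiplier of the constraint $\nabla\cdot v=0$, which is why it does not appear in $\mathcal{E}_{\bf F}$.

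For (i) I would first compute the first variation of $\mathcal{E}_{\bf F}$ at $u$ in an admissible direction $\varphi$ (so $\nabla\cdot\varphi=0$ and $\varphi\cdot\n=0$ on each $\partial B_i$):
\[
\langle\mathcal{E}_{\bf F}'(u),\varphi\rangle=4\int D(u):D(\varphi)+\frac{2}{\beta}\sum_{i=1,2}\int_{\partial B_i}(u-\d^i)\cdot\varphi-\int{\bf F}\cdot\varphi .
\]
Since $D(u)$ is symmetric, $D(u):D(\varphi)=D(u):\nabla\varphi$; integrating by parts, using $2\,\nabla\cdot D(u)=\Delta u$ (from $\nabla\cdot u=0$), the equation $-\Delta u+\nabla p={\bf F}$, and $\int\nabla p\cdot\varphi=0$ (again $\nabla\cdot\varphi=0$ together with $\varphi\cdot\n=0$ on $\partial\Omega_h$), the bulk term reduces to a multiple of $\int{\bf F}\cdot\varphi$ plus the boundary integral of $(D(u)\n)\cdot\varphi$ over $\partial B_1\cup\partial B_2$. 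On each sphere only the tangential part of $D(u)\n$ pairs with the tangential field $\varphi$, and the Navier condition $(u-\d^i)\times\n=-2\beta[D(u)\n]\times\n$ expresses that tangential part as a multiple of $(u-\d^i)$; inserting it, the force, penalty, and boundary contributions cancel, so $\langle\mathcal{E}_{\bf F}'(u),\varphi\rangle=0$ for every admissible $\varphi$. Since $\mathcal{E}_{\bf F}$ is quadratic plus linear, its Taylor expansion about $u$ is exact:
\[
\mathcal{E}_{\bf F}(u+\varphi)-\mathcal{E}_{\bf F}(u)=\langle\mathcal{E}_{\bf F}'(u),\varphi\rangle+2\int|D(\varphi)|^2+\frac1\beta\sum_{i}\int_{\partial B_i}|\varphi|^2\ \geq\ 0 ,
\]
so $u$ minimizes $\mathcal{E}_{\bf F}$ over $\A$. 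Moreover the quadratic remainder vanishes only for $\varphi\equiv0$ (a field with $D(\varphi)=0$ is a rigid motion, and a rigid motion vanishing on a whole sphere is trivial), so the minimizer is unique; existence follows from coercivity, for which the Poincar\'e-type inequality for the symmetrized gradient of Proposition~\ref{prop: poincare special} is available.

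For (ii) I would test the momentum balance $\nabla\cdot\sigma(u,p)={\bf F}$ against $u$ itself over $\Omega_h$. Because $\sigma(u,p)=-2D(u)+p\,\mathrm I$ and $\nabla\cdot u=0$, one has $\sigma(u,p):D(u)=-2|D(u)|^2$, so integration by parts — the contribution of the sphere $|\x|=R$ being discarded by the far-field decay — yields an identity relating $\int|D(u)|^2$, $\int{\bf F}\cdot u$ and $\sum_i\int_{\partial B_i}(\sigma(u,p)\n)\cdot u$. On $\partial B_i$ I would split $u=(u-\d^i)+\d^i$: the component along $\d^i$ gives, after summation, the drag $\mathcal{F}(u)=\int_{\partial B^i}\sigma(u,p)\n\cdot\d^i$, while $u-\d^i$ is tangential, so only the tangential traction $-2[D(u)\n]_{\mathrm{tan}}$ acts against it, and the Navier condition converts that pairing into $\frac1\beta\int_{\partial B_i}|u-\d^i|^2$. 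Collecting these identities gives exactly $\mathcal{F}(u)=2\int|D(u)|^2+\frac1\beta\sum_i\int_{\partial B_i}|u-\d^i|^2=\mathcal{E}_{\bf F}(u)+\int{\bf F}u$.

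Both computations are elementary; the two points that require care are the consistent choice of orientation of the unit normal $\n$ in every boundary integral — so that the Navier term enters with the correct sign — and the justification of the integrations by parts on the \emph{unbounded} domain $\Omega_h=\mathbb R^3\backslash\overline{(B_1\cup B_2)}$, where one invokes $u\to0$ and $\nabla(u-u_\Phi)\in[L^2(\Omega_h)]^3$ to discard the contribution of $|\x|=R$ as $R\to\infty$; when the scheme is applied to \eqref{stokes}, whose force is a sum of $\delta$'s, one first establishes everything for smooth compactly supported ${\bf F}$ and then passes to the limit.
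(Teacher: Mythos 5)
The paper itself supplies no argument for this proposition (it is introduced with ``One can easily prove\dots''), so there is no proof to compare against; your two-part strategy --- vanishing first variation plus exact quadratic expansion for (i), and testing $\nabla\cdot\sigma(u,p)={\bf F}$ against $u$ for (ii) --- is the standard one and is surely what the authors intend. Your explicit restriction of $\mathcal{A}$ to divergence-free fields, the treatment of $p$ as a Lagrange multiplier, and the attention to the far-field decay needed to discard the boundary term at $|\x|=R$ are all appropriate and are points the paper leaves implicit.

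However, the one step you assert without computing --- ``the force, penalty, and boundary contributions cancel'' --- fails for $\mathcal{E}_{\bf F}$ exactly as defined. Integrating by parts in the quadratic term gives
\begin{equation*}
4\int D(u):D(\varphi)\;=\;-2\int \Delta u\cdot\varphi\;+\;4\int_{\partial\Omega_h}\bigl(D(u)\nu\bigr)\cdot\varphi\;=\;2\int {\bf F}\cdot\varphi\;+\;4\int_{\partial\Omega_h}\bigl(D(u)\nu\bigr)\cdot\varphi,
\end{equation*}
using $2\nabla\cdot D(u)=\Delta u$, the momentum equation, and $\int\nabla p\cdot\varphi=0$. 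With the correct orientation of $\n$ the boundary integral cancels the penalty term $\frac{2}{\beta}\sum_i\int_{\partial B_i}(u-\d^i)\cdot\varphi$ exactly, but the bulk term contributes $2\int{\bf F}\cdot\varphi$ while the linear part of $\mathcal{E}_{\bf F}$ contributes only $-\int{\bf F}\cdot\varphi$, leaving a residual $\int{\bf F}\cdot\varphi\neq 0$. The Stokes solution of \eqref{stokes_aux_th2} is the minimizer of $2\int|D(v)|^2+\frac1\beta\sum_i\int_{\partial B_i}|v-\d^i|^2-2\int{\bf F}\,v$ (equivalently, of the paper's functional with the quadratic terms halved), not of $\mathcal{E}_{\bf F}$ as written; the identity in (ii) inherits the same normalization (and a sign that depends on which of the paper's two mutually inconsistent conventions for $\n$ and for $\F$ one adopts). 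So your argument is the right one in outline, but as a proof of the literal statement it breaks precisely at the claimed cancellation: you must either carry the integration by parts through and correct the coefficient of $\int{\bf F}v$ in $\mathcal{E}_{\bf F}$, or record the statement with the corrected functional. (This correction is harmless for the use made of the proposition in Section~\ref{sect:proof of the Navier result}, where only upper bounds up to multiplicative constants are needed, but it must be made for the proposition itself to be true.)
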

We will also need the following Poincare type inequality:
\begin{proposition}\label{prop: poincare special}{\it
Let $v$ be the function such that $v\cdot \n=0$ on $\partial B_i$. Then
 \begin{equation}\label{fromkorn}
\int\limits_{\Omega^*}|v|^2< C \int\limits_{\tilde{\Omega}}|D(v)|+\int\limits_{\Gamma}|v|^2,
\end{equation}
where $\Omega^{*}$ and $\tilde{\Omega}$ are bounded domains such that $\Omega^*\subset\tilde\Omega\subset  \Omega_h$, and the set $\Gamma:=\partial \tilde{\Omega}\cap \partial B_i$ is not empty  (see Fig. 3)}.
\begin{figure}\label{Fig 3}
\centerline{\includegraphics[ height=4 cm]{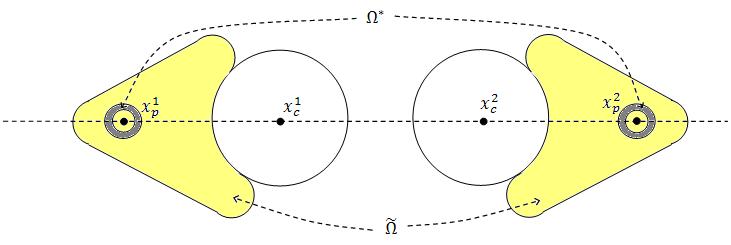}}
\caption{Domains $\Omega^*$ and $\tilde{\Omega}$.}
\end{figure}
\end{proposition}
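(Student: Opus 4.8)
\textbf{Proof proposal for Proposition~\ref{prop: poincare special}.}

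The plan is to derive \eqref{fromkorn} from the classical Korn inequality together with a standard Poincar\'e/trace argument on the bounded domain $\tilde\Omega$. First I would recall the first Korn inequality on $\tilde\Omega$: for $v\in [H^1(\tilde\Omega)]^3$ one has $\int_{\tilde\Omega}|\nabla v|^2\le C\bigl(\int_{\tilde\Omega}|D(v)|^2+\int_{\tilde\Omega}|v|^2\bigr)$, and the second Korn inequality, which quotients out the finite-dimensional space $\mathcal R$ of rigid motions $x\mapsto a+b\times x$: $\inf_{r\in\mathcal R}\|v-r\|_{H^1(\tilde\Omega)}\le C\|D(v)\|_{L^2(\tilde\Omega)}$. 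The idea is that controlling $v$ requires controlling its rigid-motion component, and the two boundary ingredients on the right of \eqref{fromkorn} — the constraint $v\cdot\n=0$ on $\partial B_i$ (hence on $\Gamma$) and the term $\int_\Gamma|v|^2$ — are exactly what pins down that component.

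The key steps, in order, would be: (1) Given $v$, pick the best rigid motion $r\in\mathcal R$ with $\|v-r\|_{H^1(\tilde\Omega)}\le C\|D(v)\|_{L^2(\tilde\Omega)}$ from the second Korn inequality; note $D(r)=0$ so $D(v-r)=D(v)$. (2) Bound the rigid-motion part: on the piece of spherical boundary $\Gamma\subset\partial B_i$, the normal component of $v$ vanishes, so $\int_\Gamma|r\cdot\n|^2=\int_\Gamma|(r-v)\cdot\n|^2\le \int_\Gamma|v-r|^2$, and by the trace theorem the latter is $\lesssim \|v-r\|_{H^1(\tilde\Omega)}^2\lesssim \|D(v)\|_{L^2(\tilde\Omega)}^2$. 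Since $\Gamma$ is a nondegenerate patch of a sphere, the map $r\mapsto (r\cdot\n)|_\Gamma$ is injective on the six-dimensional space $\mathcal R$ (a rigid motion tangent to a spherical cap everywhere must vanish — the sphere is not a surface of revolution about a line through... more precisely one checks directly that $a\cdot\n+ (b\times x)\cdot\n\equiv 0$ on a cap forces $a=b=0$), so by equivalence of norms on the finite-dimensional space $\mathcal R$ we get $\|r\|_{L^2(\tilde\Omega)}\lesssim \|r\cdot\n\|_{L^2(\Gamma)}\lesssim \|D(v)\|_{L^2(\tilde\Omega)}+\|v\|_{L^2(\Gamma)}$, using $\|r\cdot\n\|_{L^2(\Gamma)}\le \|v\cdot\n\|_{L^2(\Gamma)}+\|(v-r)\cdot\n\|_{L^2(\Gamma)}\le \|v\|_{L^2(\Gamma)}+C\|D(v)\|_{L^2(\tilde\Omega)}$ (and in fact $v\cdot\n=0$ on $\Gamma$ makes the first term drop). (3) Combine: $\|v\|_{L^2(\Omega^*)}\le \|v\|_{L^2(\tilde\Omega)}\le \|v-r\|_{L^2(\tilde\Omega)}+\|r\|_{L^2(\tilde\Omega)}\lesssim \|D(v)\|_{L^2(\tilde\Omega)}+\|v\|_{L^2(\Gamma)}$, which is \eqref{fromkorn} (with an $L^2$ norm of $D(v)$; if the statement literally intends $\int|D(v)|$ rather than $\int|D(v)|^2$ this is a typo, and one works throughout with the $L^2$ norm, which is what the energy functional $\mathcal E_{\bf F}$ uses anyway).

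The main obstacle I anticipate is step (2): verifying that the linear map $\mathcal R\to L^2(\Gamma)$, $r\mapsto (r\cdot\n)|_\Gamma$, is injective, i.e. that no nonzero rigid velocity field is everywhere tangent to the spherical cap $\Gamma$. One must use that $\Gamma$ is a genuinely two-dimensional curved patch and not, say, a flat disk or a full cylinder; writing $r(x)=a+b\times(x-x_c^i)$ and $\n=(x-x_c^i)$ on $\partial B_i$, the tangency condition becomes $a\cdot(x-x_c^i)+\bigl(b\times(x-x_c^i)\bigr)\cdot(x-x_c^i)=a\cdot(x-x_c^i)\equiv 0$ for all $x$ in the cap, forcing $a=0$; then $b$ is unconstrained by this — so one cannot drop the $\int_\Gamma|v|^2$ term and cannot hope to bound the rotational part $b\times(x-x_c^i)$ by $D(v)$ alone. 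This is actually fine for the proposition as stated, because the rotational rigid motion is divergence-free and has bounded $L^2$ norm on the bounded set $\Omega^*$ controlled by its value on $\Gamma$ through $\|v\|_{L^2(\Gamma)}$; the clean way to package this is: $\mathcal R_0:=\{r\in\mathcal R: (r\cdot\n)|_\Gamma=0\}$ is still finite-dimensional, and on $\mathcal R_0$ the seminorm $\|\cdot\|_{L^2(\Gamma)}$ is a norm (a rigid motion vanishing in normal component on a spherical cap and also vanishing in $L^2(\Gamma)$ vanishes identically), so equivalence of norms on $\mathcal R_0$ closes the argument. I would present the proof with this two-step decomposition of $\mathcal R$ to make the role of each right-hand term transparent.
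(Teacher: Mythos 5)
Your proposal is correct and is essentially the paper's argument run forwards rather than by contradiction: both split $v$ into a rigid motion plus a remainder controlled by Korn's inequality (the paper uses the $L^2$-projection onto the rigid-motion space and a compactness argument, you use the second Korn inequality and norm equivalence on a finite-dimensional space), and both close by observing that a rigid velocity field determined by its data on the two-dimensional spherical patch $\Gamma$ must vanish. Your detour through the normal-trace map $r\mapsto (r\cdot\n)|_{\Gamma}$ and the subspace $\mathcal{R}_0$ is unnecessary though harmless --- the paper in fact never uses $v\cdot\n=0$ in this proof, since $\|\cdot\|_{L^2(\Gamma)}$ is already a norm on the full six-dimensional space of rigid motions --- and you are also right that the exponent on $|D(v)|$ in \eqref{fromkorn} should be $2$, an inconsistency present in the paper's own proof.
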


\begin{proof}{$\;$}\\
For the sake of simplicity we consider the case when $\tilde\Omega$ and $\Omega^*$ are symmetric with respect to plane $xy$: if $\x=(x,y,x)\in \Omega^{*}(\text{or }\tilde{\Omega})$, then $\x^{\text{sym}}:=(x,y,-z)\in \Omega^{*}(\text{or }\tilde{\Omega})$.\\
\noindent{STEP 1.} Assume by contradiction that there exists such sequence $$\left\{v_n: v_n\cdot \n=0\text{ on $\Gamma $}\right\}$$ that
\begin{equation}
\int_{\Omega_*}|v_n|^2=1,\int_{\tilde{\Omega}} |D(v_n)|\rightarrow 0 \text{ and }\int\limits_{\Gamma}|v_n|^2\rightarrow 0.
\end{equation}
Let $V=\left\{\left.\gamma+\Lambda\times \x\right| \;\gamma,\Lambda \in \mathbb R^3\right\}$ be the linear subspace of $\left[L^2(\tilde{\Omega})\right]^3$ ("rigid motion space") and $\Pi:\left[L^2(\tilde{\Omega})\right]^3\rightarrow V$ is the corresponding projection operator.
Introduce $w_n:= (I-\Pi)v_n$. Then for all $n$ there exit $\gamma,\Lambda \in \mathbb R^3$ such that
\begin{equation}
v_n(\x)=\gamma_n+\Lambda_n\times \x +w_n(\x).
\end{equation}
\noindent{STEP 2.} We claim that
\begin{equation}\label{claim2}
\|w_n\|^2_{H^1(\tilde{\Omega})}=\int _{\tilde{\Omega}}|w_n|^2+|\nabla(w_n)|^2\rightarrow 0.
\end{equation}
Indeed, the functions $w_n$ satisfy the Korn inequality (see, e.g., Theorem 2.5 in \cite{OleShaYos92}):
\begin{equation}\label{w_korn}
\int_{\tilde{\Omega}}|\nabla w_n|^2\leq C\int_{\tilde{\Omega}} |D(w_n)|^2.
\end{equation}
On the other hand, functions $w_n$ are orthogonal in $\left[L^2(\tilde{\Omega})\right]^3$ to $V$ and, thus, they are orthogonal to constant three-dimensional vectors. It means that the avarage of each component of $w_n$ in $\tilde{\Omega}$ is zero and Poincaret inequality implies that
\begin{equation}\label{w_poincaret}
\int_{\tilde{\Omega}}|w_n|^2<C\int_{\tilde{\Omega}}|\nabla(w_n)|^2
\end{equation}
Collecting togeather \eqref{w_korn}, \eqref{w_poincaret} and $D(w_n)=D(v_n)$ we get  \eqref{claim2}.\\
\noindent STEP 3. We claim that there exists a sequence $\left\{n_k\subset \mathbb N\right\}_{k=1}^{+\infty}$
\begin{equation}
\gamma_{n_k}\rightarrow 0 \text{ and }\Lambda_{n_k}\rightarrow 0\text{ as }k\rightarrow+\infty.
\end{equation}
Indeed, $V$ is a finite dimensional space equipped with $L^2$-norm. The sequence $$\left\{\gamma_n+\Lambda_n\times\x=v_n-w_n\right\}$$ is bounded in $\left[L^2(\tilde{\Omega})\right]^3$ and, thus, there exists a convergent subsequence $$\gamma_{n_k}+\Lambda_{n_k}\times \x\rightarrow \gamma + \Lambda \times \x.$$
From $\int_{\Gamma}|v_n|^2\rightarrow 0$ and that $\Gamma$ contains peace of the sphere $\partial B_i$ we get
\begin{equation}
\int_{\Gamma}|\gamma +\Lambda \times \x|^2=0\Rightarrow \gamma +\Lambda \times\x=0\text{ for all $\x\in \Gamma$}
\end{equation}
and since span of all vectors $\x\in \Gamma$ coincides with $\mathbb R^3$ we get that $\gamma$ and $\Lambda$ are zero vectors. This contradicts to the relation:
\begin{equation}
\int\limits_{\tilde{\Omega}} |v_n|^2=1.
\end{equation}
\end{proof}

\begin{remark} \rm The inequality \eqref{fromkorn} without the integral over $\Gamma$
 \begin{equation}\label{fromkorn 1}
\int\limits_{\Omega^*}|v|^2< C \int\limits_{\tilde{\Omega}}|D(v)|
\end{equation}
is not valid for  function $v$ such that $v\cdot \n =0$ on $\Gamma$. One can take nonzero vector $\Lambda\in \mathbb R^3\backslash \left\{0\right\}$ and the sequence:
$$
v_n=n\Lambda\times (\x-\x_c^1)
$$
Then $v_n\cdot \n=v_n\cdot(\x-\x_c^1)=n\Lambda\times (\x-\x_c^1)\cdot(\x-\x_c^1)=0$, $\int_{\Omega^*} |v_n|^2=n^2\int_{\Omega^*}|\Lambda\times(\x-\x_c)|^2\rightarrow +\infty$, while $|D(v_n)|=0$.
\end{remark}
\begin{corollary} For functions $v$: $(v-\d^i)\cdot \n=0$ the following holds
\begin{equation}\label{now}
\int\limits_{\Omega^*}|v|^2< C \int\limits_{\tilde{\Omega}}|D(v)|+C\int\limits_{\Gamma}|v-\d^i|^2+C<C\mathcal{E}(v)+C.
\end{equation}
\end{corollary}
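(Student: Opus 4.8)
The plan is to obtain the corollary as an immediate consequence of the Poincar\'e-type inequality \eqref{fromkorn} of Proposition~\ref{prop: poincare special}. The one observation that unlocks everything is that if $(v-\d^i)\cdot\n=0$ on $\partial B_i$, then the shifted field $\tilde v:=v-\d^i$ satisfies the hypothesis $\tilde v\cdot\n=0$ on $\partial B_i$ of Proposition~\ref{prop: poincare special}, and, since $\d^i$ is a constant vector, $D(\tilde v)=D(v)$. Applying \eqref{fromkorn} to $\tilde v$ therefore gives
$$\int_{\Omega^*}|v-\d^i|^2 < C\int_{\tilde\Omega}|D(v)| + \int_{\Gamma}|v-\d^i|^2.$$
First I would then pass from $v-\d^i$ back to $v$ by the triangle inequality and $|\d^i|=1$: namely $\int_{\Omega^*}|v|^2\le 2\int_{\Omega^*}|v-\d^i|^2+2|\Omega^*|$, which combined with the previous display yields the first inequality in \eqref{now}, the additive constant being (twice) the volume of the fixed domain $\Omega^*$.

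For the second inequality I would bound each of the two remaining terms by the energy $\mathcal E(v)=2\int|D(v)|^2+\tfrac1\beta\sum_{i=1,2}\int_{\partial B_i}|v-\d^i|^2$. By Cauchy--Schwarz and Young's inequality, $\int_{\tilde\Omega}|D(v)|\le |\tilde\Omega|^{1/2}\bigl(\int_{\tilde\Omega}|D(v)|^2\bigr)^{1/2}\le \tfrac12|\tilde\Omega|+\tfrac12\int_{\Omega_h}|D(v)|^2\le \tfrac12|\tilde\Omega|+\tfrac14\mathcal E(v)$, using $\tilde\Omega\subset\Omega_h$ and $2\int|D(v)|^2\le\mathcal E(v)$. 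Since $\Gamma\subset\partial B_i$, also $\int_{\Gamma}|v-\d^i|^2\le\int_{\partial B_i}|v-\d^i|^2\le\beta\,\mathcal E(v)$. Adding these two estimates and absorbing the fixed geometric constants $|\Omega^*|$, $|\tilde\Omega|$, $|\Gamma|$ into a single $C$ completes the chain \eqref{now}.

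There is essentially no obstacle once Proposition~\ref{prop: poincare special} is granted; the mathematical content lives entirely in that proposition and the corollary is bookkeeping. The only points deserving a moment's care are: (i) that subtracting the constant $\d^i$ does not alter $D(v)$, so the gradient term in \eqref{fromkorn} is genuinely the one controlled by the energy; (ii) that $|\d^i|=1$, so the additive ``constant'' terms are controlled by fixed multiples of the volumes of $\Omega^*$ and $\tilde\Omega$ and the area of $\Gamma$ (all independent of $v$, though possibly depending on $h(0)$, $\beta$, $\lambda$ through the choice of domains, consistent with the statement of Theorem~\ref{thm_navier}); and (iii) if one prefers to state the result with the forced energy $\mathcal E_{\bf F}$ rather than with ${\bf F}=0$, the extra term $-\int{\bf F}v$ is absorbed by Young's inequality together with a global Korn/Poincar\'e bound, at the cost of enlarging the constants only.
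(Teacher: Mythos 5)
Your proof is correct, and since the paper states this corollary without proof, your derivation (apply Proposition~\ref{prop: poincare special} to the shifted field $v-\d^i$, use $D(v-\d^i)=D(v)$ and $|\d^i|=1$ to return to $v$ at the cost of an additive constant, then control $\int_{\tilde\Omega}|D(v)|$ by Young's inequality and $\int_\Gamma|v-\d^i|^2$ by $\beta\,\mathcal{E}(v)$) is precisely the intended bookkeeping. The only point worth a footnote is that the shift must be taken with the $\d^i$ corresponding to the sphere that $\Gamma$ touches, which is exactly the setting of the proposition.
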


\subsection{Proof of Theorem \ref{thm_navier}}
\label{proof of navier}
We consider the following Stokes problem \begin{equation*}
\left\{\begin{array}{l}
-\Delta u+\nabla p=\sum_{i=1,2}f_p\d^i\delta(\x-\x_p^i),\\
\nabla \cdot u =0,\\
(u+h'(t)\d^i)\cdot \n=0,\\
(u+h'(t)\d^i)\times \n=-2\beta\left[D(u)\n\right]\times \n ,
\end{array}\right.
\end{equation*}
and the balance equation
\begin{equation}
mh''(t)-\F(u)+f_p=0.
\end{equation}
where $\F(u)=\d^i\cdot\int _{\partial B_i}2D(u)\n-p\n$. The definition of $\F$ does not depend on $i=1,2$.

\noindent{STEP 1.} First, we reduce the original system with singular force density to the problem with regular one.

Denote $K:=-1/f_ph'(t)$. Consider the function $\tilde{u}_{\Phi}$ defined as follows:
\begin{equation*}
\tilde{u}_{\Phi}=-\frac{1}{r}\cdot\frac{\partial}{\partial z} \left[\chi \psi_{\Phi}\right] \e_r+\frac{1}{r}\cdot \frac{\partial}{\partial r}\left[\chi\psi_\Phi\right] \e_z.
\end{equation*}
where $\chi:\mathbb R^3\rightarrow [0,1]$ is $C^{\infty}$-function such that
\begin{equation*}
\chi(\x)=\left\{\begin{array}{ll}1,&\x\in \mathcal{U}_{\lambda/3}(\x_p^i),\\ 0,&\x\in \mathcal{U}_{\lambda/2}(\x_p^i).\end{array}\right.
\end{equation*}
and $\psi_\Phi(r,z):\mathbb R_r\times \mathbb R_z\rightarrow \mathbb R$:
\begin{equation*}
\psi_{\Phi}=\frac{K}{8\pi }\left[\frac{1}{\sqrt{(z-l_p)^2+r^2}}-\frac{1}{\sqrt{(z+l_p)^2+r^2}}\right],
\end{equation*}
where $l_p=h+2+\lambda$ is a $z-$coordinate of a tail.
Function $\tilde{u}_{\Phi}$ solves the following problem:
\begin{equation}
\left\{\begin{array}{l}
-\Delta \tilde{u}_{\Phi}+\nabla \tilde{p}_{\Phi}=\sum\limits_{i=1,2}K \d^i\delta(\x-\x_i) -F(\x),\\
\nabla\cdot u =0,\\
\tilde{u}_{\Phi}\cdot \n=0,\\
\tilde{u}_{\Phi}\times \n= -2\beta\left[D(\tilde{u}_{\Phi})\n\right]\times \n.\end{array}
\right.
\end{equation}
for some scalar $\tilde{p}_{\Phi}$ and $C^{\infty}$ function $F$ which support belongs to $\Omega^*=\left\{\xi/3<|\x-\x_i|<\xi/2,i=1,2\right\}$.
Consider $\overline{u}=-(1/K)u-\tilde{u}_{\Phi}$. It satisfies the following problem
\begin{equation}
\left\{\begin{array}{l}
-\Delta \overline{u}+\nabla {\overline{p}}=F(\x),\\
\nabla\cdot \overline{u} =0,\\
(\overline{u}-\d^i)\cdot \n=0,\\
(\overline{u}-\d^i)\times \n=-2\beta\left[D(\overline{u})\n\right]\times \n.
\end{array}
\right.
\end{equation}

\noindent{STEP 2.} We will prove the following inequality:
\begin{equation}\label{we_will_prove}
\F(\overline{u})<C(|\ln h|+K^2).
\end{equation}
Assume for simplicity $K>1$.\\
1. We claim that
\begin{equation}
\left|\int F\overline{u}\right|<\frac{1}{4}\mathcal{E}(\overline{u})+C.
\end{equation}
where
\begin{equation}
\mathcal{E}(v)=2\int|D(v)|^2+\frac{1}{\beta}\sum_{i=1,2}\int_{\partial B_i}|v-\d^i|^2
\end{equation}
Indeed, in view of \eqref{now}
\begin{eqnarray}
\left|\int F\overline{u}\right|<KC\left(\int\limits_{\Omega^*} |\overline{u}|^2\right)^{1/2}\leq KC \sqrt{\mathcal{E}(\overline{u})+1}<\frac{1}{4}\mathcal{E}(u)+CK^2.
\end{eqnarray}
2. Consider $\mathcal{E}_F(v)$ defined as follows
\begin{equation}
\mathcal{E}_F(v)=2\int|D(v)|^2+\frac{1}{\beta}\sum_{i=1,2}\int_{\partial B_i}|v-\d^i|^2-\int Fv
\end{equation}
Then from 1. we have that
\begin{equation}
\mathcal{E}_F(\overline{u})>\frac{1}{2}\mathcal{E}(\overline{u})-CK^2
\end{equation}
Observe that
\begin{eqnarray}
\F(\overline{u})&=&\mathcal{E}_F(\overline{u})+\int F \overline{u}<\mathcal{E}_F(\overline{u})+\frac{1}{4}\mathcal{E}(\overline{u})+CK^2\\
&&\leq C\mathcal{E}_F(\overline{u})+CK^2<C\min\limits_{(v-\d^i)\cdot n =0}\mathcal{E}(v)+CK^2\\
&&\leq C|\ln h|+CK^2.
\end{eqnarray}
And thus inequality \eqref{we_will_prove} is proven.

\noindent{STEP 3.}
The balance equation is
\begin{equation}
mh''(t)-\F(u)+f_p=0.
\end{equation}
The definition of $\F$ does not depend on $i=1,2$. \\
Due to Step 2 the following inequality holds:
\begin{equation}
mh''(t)-Ch'(t)\ln h(t)+C/h'(t)+f_p<0
\end{equation}
Consider any $\nu>0$. Assume that $h'(t)<-\nu$ for all $t\in(0,t_{\star})$ and $h'(t_{\star})=-\nu$. Then
\begin{equation}
mh''(t)-Ch'(t)\ln h(t)<-f_p-C/h'(t)<-f_p+C/\nu
\end{equation}
Then by the integration
\begin{eqnarray}
mh'(t)&<&(-f_p+C/\nu)t+mh'(0)\nonumber\\&&+C\left\{h(t)(\ln h(t)- h(t))-h(0)(\ln h(0)- h(0))\right\}\nonumber\\
&<&(-f_p+C/\nu)t+mh'(0).\nonumber
\end{eqnarray}
Let us take $-mh'(0)>C/\nu T+m(\nu+h(0)/T)$ then
\begin{equation}
mh'(t)<-f_pt-C/\nu (T-t)-m\nu-mh(0)/T.
\end{equation}
Thus $t_{\star}>T$ and $-h'(0)<-h(0)/T$. Thus, time of collision $T_{\text{coll}}$ is less then $T$.\\
This completes the proof of theorem \ref{thm_navier}.

\section*{Acknowledgment}
The authors would like to thank Dr. Volodymyr Rybalko for many useful suggestions and I. Aranson for very useful discussion on biophysical aspect of the problem. \\
The work of Leonid Berlyand and Mykhailo Potomkin was supported by the DOE Grant No. DE-FG02-08ER25862.
Vitaliy Gyrya was supported by DOE Grant No. DE-AC52-06NA25396.


\bibliographystyle{ieeetr}
\bibliography{collisions}

\end{document}